\newtheorem{theorem}{Theorem}[section]
\newtheorem{corollary}[theorem]{Corollary}
\newtheorem{lemma}[theorem]{Lemma}
\newtheorem{proposition}[theorem]{Proposition}
\theoremstyle{definition}
\newtheorem{definition}[theorem]{Definition}
\theoremstyle{remark}
\newtheorem{remark}[theorem]{\sc Remark}
\newtheorem{example}[theorem]{\sc Example}
\newenvironment{lyxlist}[1]
{\begin{list}{}
{\settowidth{\labelwidth}{#1}
 \setlength{\leftmargin}{\labelwidth}
 \addtolength{\leftmargin}{\labelsep}
 }}
{\end{list}}
\renewcommand{\int}{{\mathrm{int}}}
\newcommand{\Sing}{\mathrm{Sing\hspace{1pt}}}
\newcommand{\supp}{{\mathrm{supp}}}
\newcommand{\fin}{\hspace*{\fill}$\Box$}
\newcommand{\bR}{{\mathbb R}}
\newcommand{\bC}{{\mathbb C}}
\begin{document}

\title{ Milnor fibration at infinity for mixed polynomials}

\date{\today}

\author{Ying Chen}
\address{Math\'ematiques, Laboratoire Paul Painlev\'e, Universit\'e Lille 1,
59655 Villeneuve d'Ascq, France.}
\email{Ying.Chen@math.univ-lille1.fr}

\subjclass[2000]{14D06, 58K05, 57R45, 14P10, 32S20, 58K15}

\keywords{fibrations on spheres, bifurcation locus, Newton polyhedron, regularity at infinity, mixed polynomials}

\begin{abstract}
We study the existence of Milnor fibration on a big enough sphere at infinity for a mixed polynomial  $f: \bR^{2n} \to \bR^2$. By using strong non-degeneracy condition, we prove a counterpart of N\'emethi
and Zaharia's fibration theorem. In particular, we obtain a global version of Oka's fibration theorem for strongly non-degenerate and convenient mixed polynomials.
\end{abstract}
\maketitle

\section{Introduction}\label{s:intro}
In the local case of germs of holomorphic polynomial functions with isolated singularities, it is well known that there exists a locally trivial fibration $\varphi:=\frac{f}{\left|f\right|}:S_{r}\setminus f^{-1}(0)\rightarrow S^{1}$
in a sufficiently small sphere which is called Milnor fibration, see \cite{Mi}. Unfortunately, in the global case of holomorphic polynomials, the Milnor fibration $\frac{f}{\left|f\right|}$
at infinity does not exist in general. There are some special cases where $f:\mathbb{C}^{n}\rightarrow\mathbb{C}$ has no atypical values at infinity, for instance:  ``convenient polynomials with non-degenerate Newton principal part at infinity'' (Kouchnirenko \cite{Ku}), polynomials which are ``tame'' (Broughton \cite{Br1}, \cite{Br2}), ``quasi-tame'' (N\'emethi
\cite{Ne1}, \cite{Ne2}). In these cases, the Milnor fibration $\frac{f}{\left|f\right|}$ at infinity exists in a sufficiently large sphere which is equivalent to the fibration $f:f^{-1}(S_{R}^{1})\rightarrow S_{R}^{1}$ for $R$ sufficiently. In \cite{NZ2}, N\'emethi and Zaharia considered a special class of
holomorphic polynomials called ``semitame'' whose atypical values are contained in $\left\{0\right\}$. It was shown that for semitame polynomials, the Milnor fibration $\frac{f}{\left|f\right|}$ at infinity exists. When $n=2$, A.Bodin
in \cite{Bo} proved that the Milnor fibration $\frac{f}{\left|f\right|}$ at infinity exists if and only if $f$ is semitame.
 
Recently, Oka introduced the terminology of ``mixed polynomials'' which is a polynomial function $\mathbb{C}^{n}\rightarrow\mathbb{C}$ with variables $\mathbf{z}$ and $\overline{\mathbf{z}}$, therefore a real polynomial application $\mathbb{R}^{2n}\rightarrow\mathbb{R}^{2}$. By defining non-degeneracy conditions for mixed function
germs, Oka showed in \cite[Theorem 29, 33, 36]{Oka2} that: for a strongly non-degenerate convenient mixed function germ $f:(\mathbb{C}^{n},0)\rightarrow(\mathbb{C},0)$, there exist positive
numbers $r_{0},\delta_{0}$ and $\delta\ll \delta_{0}$, such that for any $r\leq r_{0}$,
$$f:f^{-1}(D_{\delta}^{*})\cap B_{r}^{2n}\rightarrow D_{\delta}^{*}$$
is locally trivial fibrations and the topological isomorphism class does not depend on the choice of $r$ and $\delta_{0}$. Moreover,
$\varphi:=\frac{f}{\left|f\right|}:S_{r}^{2n-1}\setminus K_{r}\longrightarrow S^{1}$ is also a locally trivial fibration which is equivalent to the above fibration.
For mixed polynomials, one can also ask under which condition does the Milnor fibration $\frac{f}{\left|f\right|}$ at infinity exist? In this paper, we get approach to this problem by using the strong non-degeneracy condition at infinity defined in \cite{CT}.
Consider a mixed polynomial $f:\mathbb{C}^{n}\rightarrow\mathbb{C}$. Inspired by Oka's construction in the local case, we prove a N\'emethi and Zaharia type fibration:
\begin{theorem}
\label{thm:M fib}If $f$ is a Newton strongly non-degenerate mixed
polynomial, then $\exists\delta_{0}>0$ and $R_{0}>0$ sufficient
large such that for any $\delta\geq\delta_{0}$ and $R>R_{0}$ \[
\frac{f}{\left|f\right|}:S_{R}^{2n-1}\setminus f^{-1}(D_{\delta})\longrightarrow S^{1}\]
is a locally trivial fibration for $R\geq R_{0}$ and is equivalent
to the global fibration\[
f_{\mid}:f^{-1}(S_{\delta}^{1})\rightarrow S_{\delta}^{1}.\]
\end{theorem}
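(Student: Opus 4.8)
The plan is to transpose the N\'emethi--Zaharia argument \cite{NZ2} to the mixed setting, with ``semitameness'' replaced by Newton strong non-degeneracy at infinity \cite{CT}. Write $\rho(x)=\|x\|^{2}$ and let $M(f)$ denote the Milnor set of $f$, i.e.\ the closure of the set of $x$ with $f(x)\neq0$ at which $\varphi:=f/|f|$ fails to be a submersion in restriction to the sphere $S_{\|x\|}$. In terms of the mixed derivatives $\partial f=(\partial f/\partial z_{j})_{j}$ and $\bar\partial f=(\partial f/\partial\bar z_{j})_{j}$, a point $x$ with $f(x)\neq0$ lies in $M(f)$ iff the radial vector $x$ is $\bR$-collinear with a fixed vector that is bilinear in the pairs $(f(x),\overline{f(x)})$ and $(\partial f(x),\bar\partial f(x))$ --- the mixed analogue of Milnor's criterion, cf.\ \cite{Oka2} --- so, $f$ being a polynomial map $\bR^{2n}\to\bR^{2}$, $M(f)$ is a closed semialgebraic set to which the curve selection lemma at infinity applies. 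The theorem will follow from a \emph{Milnor condition at infinity} (Step~1) together with a standard fibration-and-equivalence argument built on it (Steps~2--3).

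\textbf{Step 1 (the main point).} I would show that strong non-degeneracy forces $R_{0},\delta_{0}>0$ such that $M(f)\cap\{\|x\|\geq R_{0}\}\subset f^{-1}(D_{\delta_{0}})$, and such that $S_{R}$ is transverse to $f^{-1}(t)$ for all $R\geq R_{0}$ and $|t|\geq\delta_{0}$; equivalently $f$ is $\rho$-regular at infinity over $\{|t|\geq\delta_{0}\}$. Both are proved by contradiction via curve selection at infinity: a hypothetical real-analytic arc $x(s)\to\infty$ ($s\to0^{+}$), lying in $M(f)$ resp.\ witnessing a failure of $\rho$-regularity, with $\liminf_{s\to0}|f(x(s))|>0$, has a Puiseux expansion whose exponent vector $v$ singles out a face $\Delta$ of the Newton polyhedron at infinity $\Gamma_{\infty}(f)$ with $0\notin\Delta$; the leading part of the $M(f)$-relation (resp.\ of the $\rho$-regularity defect) along the arc becomes the corresponding relation for the face function $f_{\Delta}$ on $(\bC^{*})^{n}$, and strong non-degeneracy --- which weights the $\mathbf z$- and $\bar{\mathbf z}$-parts separately, thereby substituting for the Euler identity unavailable for mixed polynomials --- excludes it. The face-function analysis moreover rules out $|f(x(s))|\to\infty$ and leaves only finitely many possible finite limiting values, so the assertions hold once $R_{0}$ is large and $\delta_{0}$ exceeds all of those values. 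This is the technical heart and the expected main obstacle: it upgrades the bifurcation-value estimate of \cite{CT} to statements about the whole Milnor set $M(f)$ and about the geometry of large spheres, and the handling of faces ``at infinity in mixed directions'' (where some Puiseux exponents vanish) requires care in the absence of a convenience hypothesis.

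\textbf{Step 2 (fibration on the sphere).} Fix $\delta\geq\delta_{0}$, $R\geq R_{0}$ and consider the compact manifold with corners $N_{R}=\{R_{0}\leq\|x\|\leq R,\ |f(x)|\geq\delta\}$, whose boundary faces $S_{R}\cap\{|f|\geq\delta\}$, $S_{R_{0}}\cap\{|f|\geq\delta\}$ and $f^{-1}(S_{\delta})\cap N_{R}$ are smooth by Step~1. On $N_{R}$ the pair $(\varphi,\rho)$ is a submersion onto $S^{1}\times[R_{0}^{2},R^{2}]$ along each face (no point of $M(f)$ meets $N_{R}$, $\rho$ has no critical point, and the spheres are transverse to $f^{-1}(S_{\delta})$), hence a locally trivial fibration by Ehresmann's theorem for manifolds with corners. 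Trivializing in the $\rho$-direction shows that $\varphi\colon S_{R}\setminus f^{-1}(D_{\delta})\to S^{1}$ is a locally trivial fibration, surjective (its boundary $S_{R}\cap f^{-1}(S_{\delta})$ already surjects, the tube at infinity being a product over $S_{\delta}$ by Step~1) and of isomorphism class independent of $R\geq R_{0}$; independence of $\delta\geq\delta_{0}$ follows similarly, the shell $S_{R}\cap\{\delta'\leq|f|\leq\delta\}$ being a collar of the boundary because the restriction of $|f|$ to $S_{R}$ has no critical value in $[\delta',\delta]$.

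\textbf{Step 3 (equivalence with the tube).} Using $\rho$-regularity at infinity, the region $\{\|x\|\geq R_{0},\ |f(x)|\geq\delta\}$ carries a $\varphi$-horizontal vector field moving the outer boundary $S_{R}\cap\{|f|\geq\delta\}$ (for large $R$) onto a collar of the tube $f^{-1}(S_{\delta})\cap\{\|x\|\geq R_{0}\}$; rounding the corner yields a fibrewise diffeomorphism over the identification $S^{1}\cong S_{\delta}$, $\theta\mapsto\delta e^{\sqrt{-1}\theta}$, between $\varphi\colon S_{R}\setminus f^{-1}(D_{\delta})\to S^{1}$ and $f_{\mid}\colon f^{-1}(S_{\delta})\cap\{\|x\|\geq R_{0}\}\to S_{\delta}$. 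Since by Step~1 $f$ has no bifurcation value at infinity in $\{|t|\geq\delta_{0}\}$, the tube over $S_{\delta}$ outside a large ball is a product, so $f^{-1}(S_{\delta})\cap\{\|x\|\geq R_{0}\}\hookrightarrow f^{-1}(S_{\delta})$ is a fibrewise deformation retract, giving the asserted equivalence with $f_{\mid}\colon f^{-1}(S_{\delta})\to S_{\delta}$. (One could instead bypass Steps~2--3 in the spirit of Oka's local construction \cite{Oka2}: cover $\{\|x\|\geq R_{0},\ |f|\geq\delta_{0}\}$ by the toric charts attached to the faces of $\Gamma_{\infty}(f)$, use strong non-degeneracy to build on each chart a vector field controlling $\rho$, $\arg f$ and $|f|$, and patch by a partition of unity.)
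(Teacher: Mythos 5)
Your proposal is essentially correct and rests on the same technical foundation as the paper (the bifurcation-value bound $f(\Sing f)\cup S(f)$ bounded from \cite[Theorem~1.1]{CT} plus a curve-selection/face-function analysis), but it organizes the geometry differently, and there is one spot where the write-up is wrong as stated.

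The paper's key technical lemma is Proposition~\ref{pro:Under-the-same}: on the region $\{|f|\geq\delta_2\}$ (with no lower bound on $\|\mathbf z\|$) the vectors $\mathbf z$, $v_1$, $v_2$ are either $\bR$-independent or satisfy $\mathbf z=av_1+bv_2$ with $a>0$. The sign case $a>0$ is needed precisely because the paper constructs the vector field $\omega$ on all of $N=B_R\cap\{|f|\geq\delta\}$, which may contain points of small norm; ruling out $a<0$ requires a second curve-selection argument comparing orders of $\d f/\d t$. You sidestep this entirely by working on $\{\|\mathbf z\|\geq R_0,\ |f|\geq\delta_0\}$, where full $\bR$-independence of $\mathbf z,v_1,v_2$ follows directly from $f(\Sing f)\cup S(f)\subset D_{\delta_0}$, and by (implicitly) taking $\delta_0>\max_{B_{R_0}}|f|$ so that $f^{-1}(S_\delta)\subset\{\|\mathbf z\|>R_0\}$ for all $\delta\geq\delta_0$; this makes the flow from $S_R$ toward $\{|f|=\delta\}$ stay in the region where the vector field is defined. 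This is a legitimate simplification, and trades the sign-analysis of Proposition~\ref{pro:Under-the-same} for a constraint on $\delta_0$ and the heavier (but standard) Ehresmann-with-corners machinery. You would be well advised, however, to be explicit about the constraint $\delta_0>\max_{B_{R_0}}|f|$, since without it the flow argument in Step~3 can leave the region where the vector field is defined, and that is exactly the situation Proposition~\ref{pro:Under-the-same} was designed to handle.

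Two points in Step~1 are stated too coarsely: (i) the assertion that the Puiseux arc ``singles out a face $\Delta$ with $0\notin\Delta$'' is only automatic when the leading exponent $d_{\mathbf P}$ is strictly negative, i.e.\ when $|f(x(s))|\to 0$ or $\infty$; in the remaining case $d_{\mathbf P}=0$ the face may contain the origin (a bad face), which strong non-degeneracy of $\Gamma^{+}(f)$ does not exclude. You recover from this by noting that the finite limiting values are finitely many and taking $\delta_0$ larger than all of them, but the order of the argument should make clear that one first disposes of $|f|\to\infty$ (which forces $d_{\mathbf P}<0$ and a face of $\Gamma^+$), and then invokes boundedness of $S(f)$ to push the finite limits below $\delta_0$; as written the two cases are conflated.

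In Step~3 the claimed fibrewise diffeomorphism $\varphi\colon S_R\setminus f^{-1}(D_\delta)\to S^1$ with $f_{\mid}\colon f^{-1}(S_\delta)\cap\{\|\mathbf z\|\geq R_0\}\to S_\delta$ cannot hold literally: the left side is a compact manifold with boundary, while $f^{-1}(S_\delta)\cap\{\|\mathbf z\|\geq R_0\}$ is non-compact. The correct intermediate object, as in the paper, is the compact piece $f^{-1}(S_\delta^1)\cap B_R$, which has the same boundary $S_R\cap f^{-1}(S_\delta^1)$ as the sphere cap and to which the sphere cap is carried by the flow; the equivalence $f^{-1}(S_\delta^1)\cap B_R\to S_\delta^1$ with the full tube $f^{-1}(S_\delta^1)\to S_\delta^1$ then comes from transversality of $f^{-1}(t)$ with $S_R$ for all $R\geq R_0$ and $|t|=\delta$ (i.e.\ the absence of bifurcation at infinity over $S_\delta^1$), not from the ``deformation retract'' of $f^{-1}(S_\delta)\cap\{\|\mathbf z\|\geq R_0\}$ into $f^{-1}(S_\delta)$, which is an equality under your own choice of $\delta_0$. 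With these corrections your route goes through and is a genuine variant of the paper's proof.
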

Unlike in the semitame setting of holomorphic case, we don't have to suppose any other condition for atypical values of $f$ in the above theorem.  
As a consequence of the above theorem, we get the following global version of \cite[Theorem 29, 33, 36]{Oka2}:
\begin{corollary}\label{cor:mixed global}
If $f$ is a Newton strongly non-degenerate convenient mixed polynomial,
then there exists $R_{0}>0$ sufficient large such that for all $R\geq R_{0}$
the Milnor fibration at infinity\[
\frac{f}{\left|f\right|}:S_{R}^{2n-1}\setminus K\longrightarrow S^{1}\]
exists and is equivalent to the global fibration\[
f_{\mid}:f^{-1}(S_{\delta}^{1})\rightarrow S_{\delta}^{1}\]
where $\delta>0$ is sufficient large.
\end{corollary}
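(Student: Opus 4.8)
The plan is to deduce Corollary~\ref{cor:mixed global} from Theorem~\ref{thm:M fib} by exploiting the hypothesis that $f$ is convenient. First I would observe that, since convenience is the hypothesis under which the Newton polyhedron at infinity meets all coordinate axes, the standard argument (the mixed-polynomial analogue of Kouchnirenko's/Némethi–Zaharia's estimate, which should appear in the earlier sections as a consequence of strong non-degeneracy at infinity) gives that $f$ has no atypical values at infinity; equivalently, the set of bifurcation values of $f$ is finite and contained in a compact disc. Hence there is $\delta_1>0$ such that $f\colon f^{-1}(\bC\setminus \overline{D_{\delta_1}})\to \bC\setminus\overline{D_{\delta_1}}$ is a locally trivial fibration, and in particular the restriction $f_{\mid}\colon f^{-1}(S^1_\delta)\to S^1_\delta$ has, up to equivalence, a fibre independent of $\delta$ for all $\delta\ge\delta_1$.

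Next I would apply Theorem~\ref{thm:M fib} itself: convenient strongly non-degenerate polynomials are in particular Newton strongly non-degenerate, so there exist $\delta_0>0$ and $R_0>0$ such that for every $\delta\ge\delta_0$ and every $R>R_0$ the map $\frac{f}{|f|}\colon S^{2n-1}_R\setminus f^{-1}(D_\delta)\to S^1$ is a locally trivial fibration equivalent to $f_{\mid}\colon f^{-1}(S^1_\delta)\to S^1_\delta$. Now I let $\delta\to\infty$ while keeping $R$ fixed and large. The key geometric point is that, for fixed $R$, the sets $f^{-1}(D_\delta)\cap S^{2n-1}_R$ increase with $\delta$ and their union over all $\delta$ is all of $S^{2n-1}_R$ minus the link $K = \bigcap_{\delta>0}\bigl(S^{2n-1}_R\setminus f^{-1}(D_\delta)\bigr) = f^{-1}(0)\cap S^{2n-1}_R$ (here I use convenience again, to guarantee $f^{-1}(0)\cap S^{2n-1}_R$ is exactly the locus where $|f|$ fails to be bounded below on the sphere, and that $\frac{f}{|f|}$ extends across the whole complement of $K$). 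More precisely, for $R>R_0$ fixed there is $\delta(R)$ large enough that on $S^{2n-1}_R$ the function $|f|$ has no critical points with small positive value and $\{0<|f|\le\delta\}\cap S^{2n-1}_R$ deformation retracts onto $\{|f|=\delta\}\cap S^{2n-1}_R$; consequently the inclusion $S^{2n-1}_R\setminus f^{-1}(D_\delta)\hookrightarrow S^{2n-1}_R\setminus K$ is a fibrewise homotopy equivalence over $S^1$, and by Ehresmann-type arguments an equivalence of fibrations.

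Finally I would assemble the chain of equivalences: for $R$ large and $\delta$ correspondingly large,
\[
\frac{f}{|f|}\colon S^{2n-1}_R\setminus K \;\simeq\; \frac{f}{|f|}\colon S^{2n-1}_R\setminus f^{-1}(D_\delta) \;\simeq\; f_{\mid}\colon f^{-1}(S^1_\delta)\to S^1_\delta,
\]
the first equivalence from the retraction just described and the second from Theorem~\ref{thm:M fib}. Since $R$ was an arbitrary radius exceeding $R_0$, and the right-hand fibration is independent of $\delta$ for $\delta\ge\delta_1$, this is exactly the assertion of the Corollary. The main obstacle I anticipate is the middle step: showing that for fixed large $R$ one really can choose $\delta$ large enough that the complement of the small-value tube retracts onto the complement of the link — i.e. controlling the behaviour of $|f|$ restricted to $S^{2n-1}_R$ near the zero set, which is precisely where convenience (and the non-degeneracy of the Newton boundary at infinity) must be used to rule out pathological critical points of $|f|_{S^{2n-1}_R}$ accumulating on $K$ from above. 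Everything else is a packaging of Theorem~\ref{thm:M fib} with standard fibration-equivalence lemmas.
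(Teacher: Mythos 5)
Your proposal takes a genuinely different route from the paper, and the route has a gap. The paper's own proof is very short: by Remark~\ref{rem:conve}, convenience plus strong non-degeneracy forces $\mathfrak{SB}=\emptyset$ and (via \cite[Corollary 4.1]{CT}) $S(f)=\emptyset$, so Theorem~\ref{thm:appro1} gives $S(\varphi)=\emptyset$, and Theorem~\ref{thm:appro} gives that $M(\varphi)$ is bounded. The boundedness of $M(\varphi)$ is what directly yields that, for $R\ge R_0$, the fibres of $\varphi=\frac{f}{|f|}$ are transverse to $S_R^{2n-1}$ at \emph{every} point of $S_R^{2n-1}\setminus K$ — this is the whole reason $M(\varphi)$ was introduced in Section~\ref{s:appro}. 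That gives the fibration on $S_R^{2n-1}\setminus K$ in one step; the equivalence with the global fibration then comes from re-running the vector-field construction in the proof of Theorem~\ref{thm:M fib}.

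Your argument instead tries to \emph{infer} the fibration on $S_R^{2n-1}\setminus K$ from the one on $S_R^{2n-1}\setminus f^{-1}(D_\delta)$ given by Theorem~\ref{thm:M fib}. This has two problems. First, the parameter $\delta$ is pulling in two directions at once: Theorem~\ref{thm:M fib} only applies for $\delta\ge\delta_0$ \emph{large}, while the retraction of the tube $\{0<|f|\le\delta\}\cap S_R^{2n-1}$ onto its boundary needs control for $\delta$ \emph{small}; you write ``let $\delta\to\infty$'' and then describe $K$ as $\bigcap_{\delta>0}(S_R^{2n-1}\setminus f^{-1}(D_\delta))$, which is not $K$ (that intersection is empty as $\delta\to\infty$; the correct statement is $\bigcup_{\delta>0}(S_R^{2n-1}\setminus f^{-1}(D_\delta))=S_R^{2n-1}\setminus K$). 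Second, and more fundamentally, a fibrewise homotopy equivalence of a map with a locally trivial fibration does not make that map a locally trivial fibration: you still have to prove that $\varphi$ restricted to $S_R^{2n-1}\setminus K$ is a submersion at every point and that one can integrate a suitable vector field without escaping to $K$. That submersion property at \emph{all} points near the link is exactly what the boundedness of $M(\varphi)$ encodes, and it is the piece your outline never establishes — you identify it as the ``main obstacle'' but do not resolve it, whereas it is handed to you for free by Remark~\ref{rem:conve} and Theorem~\ref{thm:appro}. Once you invoke $M(\varphi)$ bounded, your detour through retractions becomes unnecessary and the proof collapses to the paper's two lines.
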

In this paper, we will review some basic definitions and properties of mixed polynomials in Section \ref{s:pre}. In order to get an effective estimation of atypical values of $\frac{f}{\left|f\right|}$, we define the $\rho$-regularity for $\frac{f}{\left|f\right|}$ in Section \ref{s:appro}, which allows us to get a type of formulation
like \cite[Theorem 1.1 ]{CT}. The proof of Theorem \ref{thm:M fib} and Corollary \ref{cor:mixed global} will be given in Section \ref{s:fib}. Our example \ref{ex:semitame} shows that the semitame condition is not sufficient to insure
the existence of the Milnor fibration $\frac{f}{\left|f\right|}$ at infinity in the mixed setting.

\section{Preliminaries}\label{s:pre}
\subsection{Mixed singularity and homogeneous polynomials}
Let $f:=(g,h):\mathbb{R}^{2n}\rightarrow\mathbb{R}^{2}$ be a polynomial
application, where $g(x_{1},\ldots,y_{n})$ and $h(x_{1},\ldots,y_{n})$
are real polynomials. By writing $\mathbf{z}=\mathbf{x}+i\mathbf{y}\in\mathbb{C}^{n}$,
where $z_{k}=x_{k}+iy_{k}$ for $k=1,2\ldots n$, we get a polynomial
function $f:\mathbb{C}^{n}\rightarrow\mathbb{C}$ in variables $\mathbf{z}$
and $\overline{\mathbf{z}}$, namely $f(\mathbf{z},\overline{\mathbf{z}}):=g(\frac{\mathbf{z}+\overline{\mathbf{z}}}{2},\frac{\mathbf{z}-\overline{\mathbf{z}}}{2i})+ih(\frac{\mathbf{z}+\overline{\mathbf{z}}}{2},\frac{\mathbf{z}-\overline{\mathbf{z}}}{2i})$,
and reciprocally for a polynomial function $f:\mathbb{C}^{n}\rightarrow\mathbb{C}$
in variables $\mathbf{z}$ and $\overline{\mathbf{z}}$, we can consider
it as a polynomial application $(\mathrm{Re}f,\mathrm{Im}f)$. Then
$f$ is called a \emph{mixed polynomial}, after \cite{Oka2}. We write $f$ as follows:
\begin{equation}
f(\mathbf{z},\overline{\mathbf{z}})=\sum_{\nu,\mu}c_{v,\mu}\mathbf{z}^{\nu}\mathbf{\overline{z}}^{\mu}\label{eq:mixed}
\end{equation}
where $c_{v,\mu}\neq0$, $\mathbf{z}^{\nu}:=z_{1}^{v_{1}}\cdots z_{n}^{v_{n}}$ and $\mathbf{\overline{z}}^{\mu}:=\overline{z}_{1}^{\mu_{1}}\cdots z_{n}^{\mu_{n}}$
for n-tuples $v=(v_{1},\ldots,v_{n})$, $\mu=(\mu_{1},\ldots,\mu_{n})\in\mathbb{N}^{n}$. In the sequel, given a mixed polynomial $f$, we consider $f$ as in the form of equation \eqref{eq:mixed}.
\\
For a mixed polynomial $f$, we shall often use derivation with respect to $\mathbf{z}$ and $\overline{\mathbf{z}}$ such as in the following notations:
\[
\mathrm{d}f:=\left(\frac{\partial f}{\partial z_{1}},\cdots,\frac{\partial f}{\partial z_{n}}\right),\overline{\mathrm{d}}f:=\left(\frac{\partial f}{\partial\overline{z}_{1}},\cdots,\frac{\partial f}{\partial\overline{z}_{n}}\right)
\]
\begin{definition}
\label{def:mixed singularity}We call $w$ a mixed singularity
of $f:\mathbb{C}^{n}\rightarrow\mathbb{C}$, if $w$ is a critical
point of the mapping $f:=(g,h):\mathbb{R}^{2n}\rightarrow\mathbb{R}^{2}$.
\end{definition}
By abuse of notation, we continue to denote the set of mixed singularities for a mixed polynomial $f$ by $\Sing f$.
The next proposition give us a straight way to calculate the locus of mixed singularities.
\begin{proposition}
\label{pro:singularity}\cite[Proposition 1]{Oka2} Let $f:\mathbb{C}^{n}\rightarrow\mathbb{C}$
be a mixed polynomial. Then $w\in\mathbb{C}^{n}$ is a mixed singularity
of $f$ if and only if there exists a complex number $\lambda$ with
$\left|\lambda\right|=1$ such that $\overline{\mathrm{d}f}=\lambda\overline{\mathrm{d}}f$.
\end{proposition}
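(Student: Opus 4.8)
The plan is to unwind the definition of a critical point of $f=(g,h)\colon\bR^{2n}\to\bR^2$ in the Wirtinger coordinates $\bz,\overline{\bz}$. Write $Df_w$ for the real differential of $f$ at $w$, regarded as an $\bR$-linear map $\bR^{2n}\to\bR^2\cong\bC$. The point $w$ is a mixed singularity exactly when $Df_w$ is not surjective; since the target is $2$-dimensional this is equivalent to $\rank Df_w\le 1$, i.e.\ to the image of $Df_w$ being contained in a real line $\bR e^{i\theta}\subset\bC$ for some $\theta\in\bR$ (the case $Df_w=0$ being absorbed, since $\{0\}$ lies in every such line). So the first step is to express $Df_w$ in terms of the holomorphic and antiholomorphic partials. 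Writing a tangent vector at $w$ as $\xi=(\xi_1,\dots,\xi_n)\in\bC^n\cong\bR^{2n}$ and using $\partial/\partial x_k=\partial/\partial z_k+\partial/\partial\overline{z}_k$ and $\partial/\partial y_k=i(\partial/\partial z_k-\partial/\partial\overline{z}_k)$, a short computation gives
\[
Df_w(\xi)=\sum_{k=1}^n\frac{\partial f}{\partial z_k}(w)\,\xi_k+\sum_{k=1}^n\frac{\partial f}{\partial\overline{z}_k}(w)\,\overline{\xi_k}.
\]

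Next I would translate the condition ``image of $Df_w$ is contained in $\bR e^{i\theta}$'' into an equation on the partial derivatives. It says $e^{-i\theta}Df_w(\xi)\in\bR$, equivalently $e^{-i\theta}Df_w(\xi)=\overline{e^{-i\theta}Df_w(\xi)}$, for every $\xi\in\bC^n$. Substituting the formula above and invoking the $\bR$-linear independence of the $2n$ functionals $\xi\mapsto\xi_k$ and $\xi\mapsto\overline{\xi_k}$ (so that an identity $\sum a_k\xi_k+\sum b_k\overline{\xi_k}\equiv0$ forces all $a_k=b_k=0$), comparison of the coefficient of $\xi_k$ on the two sides gives $e^{-i\theta}\frac{\partial f}{\partial z_k}(w)=e^{i\theta}\,\overline{\frac{\partial f}{\partial\overline{z}_k}(w)}$ for each $k$, while the coefficients of $\overline{\xi_k}$ yield only the complex conjugates of these and no new information. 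Taking complex conjugates and putting $\lambda:=e^{-2i\theta}$, so that $|\lambda|=1$, this reads $\overline{\frac{\partial f}{\partial z_k}(w)}=\lambda\,\frac{\partial f}{\partial\overline{z}_k}(w)$ for all $k$, i.e.\ $\overline{\d f}=\lambda\,\overline{\d}f$ at $w$. For the converse one simply runs the chain of equivalences backwards: given $\overline{\d f}=\lambda\,\overline{\d}f$ at $w$ with $|\lambda|=1$, write $\lambda=e^{-2i\theta}$, reverse the coefficient comparison to get $e^{-i\theta}Df_w(\xi)\in\bR$ for all $\xi$, conclude that the image of $Df_w$ lies in the line $\bR e^{i\theta}$, hence $\rank Df_w<2$ and $w$ is a mixed singularity.

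I expect the only delicate points to be bookkeeping ones: keeping the Wirtinger conventions (the factors $\tfrac12$ and the signs in the change of coordinates $\partial_{x_k},\partial_{y_k}\leftrightarrow\partial_{z_k},\partial_{\overline{z}_k}$) consistent throughout, and making the ``compare coefficients'' step airtight by appealing to the $\bR$-linear independence of $\xi\mapsto\xi_k$ and $\xi\mapsto\overline{\xi_k}$ rather than informally treating $\xi_k$ and $\overline{\xi_k}$ as free variables. It is also worth noting in passing that $\lambda$ need not be unique --- when $\d f(w)=\overline{\d}f(w)=0$ every unimodular $\lambda$ works --- which matches the fact that in that degenerate case the line $\bR e^{i\theta}$ is arbitrary.
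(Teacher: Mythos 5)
Your proof is correct. Note that the paper itself gives no proof here: Proposition~\ref{pro:singularity} is simply quoted from \cite[Proposition~1]{Oka2}, so the only proof to compare against is Oka's. Oka's argument works ``by rows'': $w$ is critical iff the gradients $\nabla g,\nabla h\in\bR^{2n}$ are $\bR$-linearly dependent, i.e.\ $\alpha\nabla g+\beta\nabla h=0$ for some real $(\alpha,\beta)\ne(0,0)$; expressing $\nabla g=\overline{\d f}+\overline{\d}f$ and $\nabla h=i(\overline{\d f}-\overline{\d}f)$ turns this into $(\alpha+i\beta)\,\overline{\d f}+(\alpha-i\beta)\,\overline{\d}f=0$, and since $\alpha+i\beta\ne0$ one can solve for $\lambda=-\frac{\alpha-i\beta}{\alpha+i\beta}$, which is automatically unimodular. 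Your argument is the dual, ``by columns'' version: instead of a linear relation among the rows of the Jacobian you ask that the image of $D f_w$ lie in a real line $\bR e^{i\theta}$, and the coefficient comparison yields $\lambda=e^{-2i\theta}$. Both are one-step linear algebra; yours is perhaps slightly more self-contained in that it starts straight from the surjectivity definition of a regular point, while Oka's dispenses with the change of variables $\partial_x,\partial_y\leftrightarrow\partial_z,\partial_{\bar z}$ in favour of known gradient identities. One small terminological point: for the coefficient-comparison step you invoke $\bR$-linear independence of the functionals $\xi\mapsto\xi_k$, $\xi\mapsto\overline{\xi_k}$, but the coefficients in the identity you equate are complex, so what you actually need (and what your parenthetical ``forces all $a_k=b_k=0$'' correctly asserts) is their independence over $\bC$ inside the space of $\bR$-linear maps $\bC^n\to\bC$; a one-line check by evaluating at $\xi=e_k$ and $\xi=ie_k$ settles it. Your closing remark about non-uniqueness of $\lambda$ when $\d f(w)=\overline{\d}f(w)=0$ is correct and matches the degenerate case $\alpha\nabla g+\beta\nabla h\equiv0$ for all $(\alpha,\beta)$ in Oka's formulation.
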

For mixed polynomials, we have two notions of homogeneous polynomials introduced in \cite{Ci} and \cite{Oka1}.
\begin{definition}
\label{def:radially weighted} A mixed polynomial $f:\mathbb{C}^{n}\rightarrow\mathbb{C}$
is called \emph{radial weighted homogeneous} if there exist $n$
integers $q_{1},\ldots,q_{n}$ with $\gcd(q_{1},\ldots,q_{n})=1$
and a positive integer $m_{r}$ such that $\sum_{j=1}^{n}q_{j}(v_{j}+\mu_{j})=m_{r}$
for every n-tuples $\nu$ and $\mu$. We call $(q_{1},\ldots,q_{n})$
the radial weight of $f$ and $m_{r}$ the radial degree
of $f$. More precisely, $f$ is radial weighted homogeneous of type $(q_{1},\ldots,q_{n};m_{r})$
if and only if it verifies the following equation for all $t\in\mathbb{R}^{*}=\mathbb{R}\setminus\left\{ 0\right\}$:\[
f(t\circ\mathbf{z})=(t^{q_{1}}z_{1},\ldots,t^{q_{n}}z_{n},t^{q_{1}}\overline{z}_{1},\ldots,t^{q_{n}}\overline{z}_{n})=t^{m_{r}}f(\mathbf{z},\overline{\mathbf{z}}).\]
\end{definition}
From Definition \ref{def:radially weighted}, we see that if $f:=(g,h):\mathbb{R}^{2n}\rightarrow\mathbb{R}^{2}$
is a radially weighted homogeneous mixed polynomial, then $g$ and $h$ are real
weighted homogeneous polynomial with the same weights and degrees as $f$.
\begin{definition}
\label{def:polar weighted} A mixed polynomial $f:\mathbb{C}^{n}\rightarrow\mathbb{C}$
is called \emph{polar weighted homogeneous} if there exist $n$
integers $p_{1},\ldots,p_{n}$ with $\gcd(p_{1},\ldots,p_{n})=1$
and a positive integer $m_{p}$ such that $\sum_{j=1}^{n}p_{j}(v_{j}-\mu_{j})=m_{p}$
for every n-tuples $\nu$ and $\mu$. We call $(p_{1},\ldots,p_{n})$
the polar weight of $f$ and $m_{r}$ the polar degree
of $f$. More precisely, $f$ is polar weighted homogeneous of type  $(p_{1},\ldots,p_{n};m_{p})$
if and only if it verifies the following equation for all $\lambda\in S^{1}$:\[
f(\lambda\circ\mathbf{z})=f(\lambda^{p_{1}}z_{1},\ldots,\lambda^{p_{n}}z_{n},\lambda^{-p_{1}}\overline{z}_{1},\ldots,\lambda^{-p_{n}}\overline{z}_{n})=\lambda^{m_{p}}f(\mathbf{z},\overline{\mathbf{z}}).\]
\end{definition}
\begin{example}
\label{ex:radial and polar}Let $f,\, g:\mathbb{C}^{2}\rightarrow\mathbb{C}$,
$f(x,y)=\left|x\right|^{2}+\left|y\right|^{2}$ and $g(x,y)=x^{2}+x^{4}\overline{y}^{2}+y^{2}$.
We see that $f$ is a radial weighted homogeneous polynomial of radial
weight $(1,1)$ and degree $2$, but $f$ is not polar weighted homogeneous.
$g$ is a polar weighted homogeneous polynomial of polar weight $(1,1)$
and degree $2$, but $g$ is not radial weighted homogeneous.
\end{example}

\subsection{Newton non-degeneracy at infinity}\label{ss:newton}
In this section, we review the definitions of Newton polyhedron and non-degeneracy conditions introduced in \cite{CT}. Let $f$ be a mixed polynomial:
\begin{definition}\label{def:The-Newton-polyhedron}
We call $\mathrm{supp}\left(f\right)=\left\{ \nu+\mu\in\mathbb{N}^{n}\mid c_{\nu,\mu}\neq0\right\}$
the \textit{support} of $f$. We say that
$f$ is \emph{convenient} if the intersection of $\mathrm{supp}\left(f\right)$
with each coordinate axis is non-empty. We denote by $\overline{\mathrm{supp}(f)}$ the
convex hull of the set $\mathrm{supp}(f)\setminus\{0\}$.
\noindent
The \emph{Newton polyhedron} of
a mixed polynomial $f$,
denoted by $\Gamma_{0}(f)$, is the convex hull of the set $\left\{ 0\right\} \cup\mathrm{supp}(f)$. The \emph{Newton boundary at infinity}, denoted by $\Gamma^+(f)$, is the union of the faces of the polyhedron
$\Gamma_{0}(f)$ which do not contain the origin. By ``face'' we
mean face of any dimension.
\end{definition}
\begin{definition}\label{def:The-mixed-polynomial}
For any face $\Delta$ of $\overline{\mathrm{supp}(f)}$, we denote
the restriction of $f$ to $\Delta \cap \supp (f)$ by $f_{\Delta} :=  \sum_{\nu +\mu \in\Delta \cap \supp (f)} c_{\nu,\mu}\mathbf{z}^{\nu}\overline{\mathbf{z}}^{\mu}$.
The mixed polynomial $f$ is called
\emph{ non-degenerate} if
$\Sing f_{\Delta}\cap f_\Delta^{-1}(0) \cap \bC^{*n} = \emptyset$, for each face $\Delta$ of $\Gamma^+(f)$. We say that $f$ is \textit{Newton strongly non-degenerate} if   $\Sing f_{\Delta} \cap \bC^{*n} = \emptyset$ for any face $\Delta$ of $\Gamma^+(f)$.
\end{definition}
It is easily seen that these two non-degeneracy conditions are not equivalent, but they coincide in the holomorphic setting. Let us recall the definition of bad faces for mixed polynomials. (See also \cite{NZ1}, \cite{CT}.)
\begin{definition}\label{d:badface}
 A face $\Delta\subseteq\overline{\mathrm{supp}(f)}$
is called \emph{bad} if:
\begin{lyxlist}{00.00.0000}
\item [{(i)}] there exists a hyperplane $H\subset\mathbb{R}^{n}$ with
equation $a_{1}x_{1}+\cdots+a_{n}x_{n}=0$ (where $x_{1},\ldots,x_{n}$
are the coordinates of $\mathbb{R}^{n}$) such that:
\item [{$\qquad\qquad$}] (a) there exist $i$ and $j$ with $a_{i}<0$
and $a_{j}>0$, \medskip{}
\item [{$\qquad\qquad$}] (b) $H\cap\overline{\mathrm{supp}(f)}=\Delta$.
\end{lyxlist}
Let $\mathfrak{B}$ denote the set of bad faces of $\overline{\mathrm{supp}(f)}$.
 A face $\Delta \in\mathfrak{B}$ is called \textit{strictly bad} if it satisfies in addition the following condition and the
set of strictly bad faces of $\overline{\mathrm{supp}(f)}$ will be denoted by $\mathfrak{SB}$:
\smallskip
\begin{lyxlist}{00.00.0000}
\item [{(ii)}] the affine subspace of the same dimension spanned by $\Delta$
contains the origin.
\end{lyxlist}
\end{definition}
Let us review here the notions of Milnor set and asymptotic $\rho$-non-regular values.
\begin{definition}\label{d:milnorset}
The \emph{Milnor set} of a mixed polynomial $f$ is
\[  M(f)=\left\{ \mathbf{z}\in\mathbb{C}^{n}\mid\exists\lambda\in\mathbb{R} \mbox{ and } \mu\in\mathbb{C}^{*}, \mbox{ such that }\lambda\mathbf{z}=\mu\overline{\mathrm{d}f}(\mathbf{z},\overline{\mathbf{z}})+\overline{\mu}\overline{\mathrm{d}}f(\mathbf{z},\overline{\mathbf{z}})\right\}.\]
\end{definition}
\begin{definition}\label{d:s}
The set of \emph{asymptotic} \emph{$\rho$-non-regular values} of
a mixed polynomial $f$ is \begin{align*}
S(f) & =\{c\in\mathbb{C\mid}\exists\,\{\mathbf{z}_{k}\}_{k\in\mathbb{N}}\subset M(f),\,\underset{k\rightarrow\infty}{\lim}\Vert\mathbf{z}_{k}\Vert=\infty\,\mathrm{and}\,\underset{k\rightarrow\infty}{\lim}\mathit{f(\mathbf{z}_{k},\overline{\mathbf{z}_{k}})=c}\}.
\end{align*}
\end{definition}

\section{Approximation of atypical values of $\frac{f}{\left|f\right|}$}\label{s:appro}
Let us denote by $\varphi$ the function $\frac{f}{\left|f\right|}:\mathbb{C}^{n}\setminus V(f)\rightarrow S^{1}$
where $V(f)=f^{-1}(0)$. To simplify notation, we continue to write $\mathrm{d}\varphi$ and $\overline{\mathrm{d}}\varphi$ specifically for the partial derivatives of the variables $\mathbf{z}$ and $\overline{\mathbf{z}}$.
\begin{lemma}
For \textbf{$\mathbf{z}\in\mathbb{C}^{n}\setminus V(f)$}, the fibre
$\varphi^{-1}(\varphi(\mathbf{z},\overline{\mathbf{z}}))$ does not
intersect transversely the sphere  $S_{\left\Vert \mathbf{z}\right\Vert }^{2n-1}$ at \textbf{$\mathbf{z}\in\mathbb{C}^{n}$},
if and only if there exists $\lambda\in\mathbb{R}$, such that
\begin{equation}
\lambda\mathbf{z}=i\overline{f}\,\overline{\mathrm{d}}f(\mathbf{z},\overline{\mathbf{z}})-if\overline{\mathrm{d}f}(\mathbf{z},\overline{\mathbf{z}}).\label{eq:5-1}
\end{equation}
In particular, $\Sing\varphi=\{\mathbf{z}\in\mathbb{C}^{n}\setminus V(f)\mid\overline{f}\,\overline{\mathrm{d}}f(\mathbf{z},\overline{\mathbf{z}})=f\overline{\mathrm{d}f}(\mathbf{z},\overline{\mathbf{z}})\}$.
\end{lemma}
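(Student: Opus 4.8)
The plan is to identify the failure of transversality between the fibre and the sphere at $\mathbf{z}$ with the condition that the Euclidean gradient of $\arg f$ is radial at $\mathbf{z}$, and then to compute that gradient by Wirtinger calculus. So fix $\mathbf{z}\in\mathbb{C}^{n}\setminus V(f)$ and put $\rho(\mathbf{w}):=\left\Vert \mathbf{w}\right\Vert ^{2}$. By the standard description of transversality of level sets — the degenerate case where $\mathbf{z}$ is a critical point of $\varphi$ (so the fibre is not a manifold at $\mathbf{z}$) being counted as non-transverse — the fibre $\varphi^{-1}(\varphi(\mathbf{z},\overline{\mathbf{z}}))$ fails to meet $S_{\left\Vert \mathbf{z}\right\Vert }^{2n-1}$ transversely at $\mathbf{z}$ exactly when $\varphi|_{S_{\left\Vert \mathbf{z}\right\Vert }^{2n-1}}$ is not a submersion at $\mathbf{z}$, equivalently when the map $(\varphi,\rho)\colon\mathbb{C}^{n}\setminus V(f)\to S^{1}\times\mathbb{R}$ is not a submersion at $\mathbf{z}$. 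Since $\mathrm{d}\rho_{\mathbf{z}}\neq0$, the latter happens precisely when $\mathrm{d}\varphi_{\mathbf{z}}$ vanishes on $\ker\mathrm{d}\rho_{\mathbf{z}}=T_{\mathbf{z}}S_{\left\Vert \mathbf{z}\right\Vert }^{2n-1}$. Now $\varphi=e^{i\arg f}$, so $\mathrm{d}\varphi=i\varphi\cdot\mathrm{d}(\arg f)$, and $\mathrm{d}(\arg f)$ — although $\arg f$ itself is only defined modulo $2\pi$ — is a globally well-defined smooth $1$-form on $\mathbb{C}^{n}\setminus V(f)$; its metric dual $\grad(\arg f)$ is orthogonal to the level sets of $\arg f$, i.e. to the fibres of $\varphi$. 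Hence the fibre is non-transverse to the sphere at $\mathbf{z}$ if and only if $\grad(\arg f)(\mathbf{z})$ lies in the radial line $\mathbb{R}\mathbf{z}$, that is, $\exists\,\lambda'\in\mathbb{R}$ with $\grad(\arg f)(\mathbf{z})=\lambda'\mathbf{z}$.

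The next step is to compute $\grad(\arg f)$. Differentiating $\arg f=\tfrac{1}{2i}\log(f/\overline{f})$ with respect to $z_{j}$ and using $\partial\overline{f}/\partial z_{j}=\overline{\partial f/\partial\overline{z}_{j}}$ gives $\dfrac{\partial(\arg f)}{\partial z_{j}}=\dfrac{\overline{f}\,\partial f/\partial z_{j}-f\,\overline{\partial f/\partial\overline{z}_{j}}}{2i\left|f\right|^{2}}$. I would then use the elementary identity $\grad u=2\,\overline{(\partial u/\partial z_{1},\dots,\partial u/\partial z_{n})}$, valid for every real-valued $u$ under the identification $\mathbb{R}^{2n}\cong\mathbb{C}^{n}$ (it is immediate from $\partial/\partial x_{j}=\partial/\partial z_{j}+\partial/\partial\overline{z}_{j}$, $\partial/\partial y_{j}=i(\partial/\partial z_{j}-\partial/\partial\overline{z}_{j})$ and $\partial u/\partial\overline{z}_{j}=\overline{\partial u/\partial z_{j}}$). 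Applied to $u=\arg f$ this yields
\[
\grad(\arg f)=\frac{i}{\left|f\right|^{2}}\bigl(f\,\overline{\mathrm{d}f}-\overline{f}\,\overline{\mathrm{d}}f\bigr).
\]

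Finally I would combine the two steps. The relation $\grad(\arg f)(\mathbf{z})=\lambda'\mathbf{z}$ for some $\lambda'\in\mathbb{R}$ is equivalent, after multiplying by the positive number $\left|f(\mathbf{z})\right|^{2}$ and setting $\lambda:=-\lambda'\left|f(\mathbf{z})\right|^{2}$ — which runs over all of $\mathbb{R}$ — to $\lambda\mathbf{z}=i\overline{f}\,\overline{\mathrm{d}}f(\mathbf{z},\overline{\mathbf{z}})-if\,\overline{\mathrm{d}f}(\mathbf{z},\overline{\mathbf{z}})$, that is, to \eqref{eq:5-1}; this gives the claimed equivalence. For the ``in particular'' clause, $\mathbf{z}\in\Sing\varphi$ means $\mathrm{d}\varphi_{\mathbf{z}}=0$, hence $\mathrm{d}(\arg f)_{\mathbf{z}}=0$, hence $\grad(\arg f)(\mathbf{z})=0$, which by the displayed formula reads $\overline{f}\,\overline{\mathrm{d}}f(\mathbf{z},\overline{\mathbf{z}})=f\,\overline{\mathrm{d}f}(\mathbf{z},\overline{\mathbf{z}})$ — precisely the $\lambda=0$ case of \eqref{eq:5-1}, as it must be, a fibre through a critical point of $\varphi$ being transverse to nothing.

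The main (indeed only) point requiring care is the Wirtinger bookkeeping: passing correctly from the ``complex gradient'' $(\partial u/\partial z_{j})_{j}$ to the genuine Euclidean gradient in $\mathbb{R}^{2n}\cong\mathbb{C}^{n}$ — getting the factor $2$, the conjugation and the sign of $i$ right — and checking that $\mathrm{d}(\arg f)$ is a well-defined global $1$-form despite the multivaluedness of $\arg f$. I expect no conceptual obstacle beyond this short computation. One can also avoid $\arg f$ and differentiate $\varphi=f\,(f\overline{f})^{-1/2}$ directly: this is a little messier but produces the same normal vector, a real multiple of $f\,\overline{\mathrm{d}f}-\overline{f}\,\overline{\mathrm{d}}f$.
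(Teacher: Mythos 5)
Your argument is correct and follows essentially the same route as the paper: both reduce non-transversality at $\mathbf{z}$ to the condition that the normal direction of the $\arg f$ level set is radial, and both extract that normal direction by Wirtinger differentiation of a local real-valued lift of $\varphi$ (the paper writes it as $-\mathrm{Re}(i\log f)$, you as $\tfrac{1}{2i}\log(f/\overline{f})$). The only difference is organizational: the paper invokes \cite[Lemma 2.1]{CT} as a black box to pass from ``fibre tangent to sphere'' to an equation in the Wirtinger derivatives, and then plugs in $\overline{\mathrm{d}}\varphi,\overline{\mathrm{d}\varphi}$, whereas you derive the transversality criterion directly from the submersion characterization and the Euclidean-gradient identity $\grad u=2\,\overline{\mathrm{d}u}$ for real $u$. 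Your version is self-contained and, incidentally, tidier on the intermediate formula for $\overline{\mathrm{d}}\varphi$ (which in the paper is stated only up to a real scalar), while both lead to the same equation \eqref{eq:5-1} after clearing $|f|^{2}$; your handling of the degenerate case $\mathrm{d}\varphi_{\mathbf{z}}=0$ as the $\lambda=0$ instance also matches the paper's reading of $\Sing\varphi$.
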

\begin{proof}
Observe first $\varphi=-\mathrm{Re}(i\log f)$. By \cite[Lemma 2.1]{CT},
the non-transversality of the fiber $\varphi^{-1}(\varphi(\mathbf{z},\overline{\mathbf{z}}))$ and the sphere  $S_{\left\Vert \mathbf{z}\right\Vert }^{2n-1}$
implies:
\begin{equation}
\gamma\mathbf{z}=\mu\overline{\mathrm{d}\varphi}(\mathbf{z},\overline{\mathbf{z}})+\overline{\mu}\,\overline{\mathrm{d}}\varphi(\mathbf{z},\overline{\mathbf{z}}),\label{eq:5-2}
\end{equation}
for some $\gamma\in\mathbb{R}$ and $\mu\in\mathbb{R}^{*}$. By definition
of $\overline{\mathrm{d}\varphi}$ and $\overline{\mathrm{d}}\varphi$,
we have:
\begin{alignat*}{1}
\overline{\mathrm{d}}\varphi(\mathbf{z},\overline{\mathbf{z}}) & =-\overline{\mathrm{d}}\mathrm{Re}(i\log f)=i\frac{\overline{\mathrm{d}}f(\mathbf{z},\overline{\mathbf{z}})}{\overline{f}}\\
\overline{\mathrm{d}\varphi}(\mathbf{z},\overline{\mathbf{z}}) & =-\overline{\mathrm{d\mathrm{Re}(i\log f)}}=-i\frac{\overline{\mathrm{d}f}(\mathbf{z},\overline{\mathbf{z}})}{f}.
\end{alignat*}
Multipling the two sides of \eqref{eq:5-2} by $\left|f\right|^{2}$,
we conclude \eqref{eq:5-1}, where $\lambda=\frac{\gamma}{\mu}\left|f\right|^{2}\in\mathbb{R}$.
In particular, taking $\lambda=0$ in \eqref{eq:5-1}, we obtain $\Sing\varphi$.
\end{proof}
Combining with the above lemma, we are led to define $\rho$-regularity for $\varphi$. (In general case, this regularity condition is defined in \cite{ACT})
\begin{definition}\label{d:phi}
We call \emph{$\rho$-non-regular locus} of $\varphi$ the semi-algebraic set:\[
M(\varphi)=\left\{ \mathbf{z}\in\mathbb{C}^{n}\setminus V(f)\mid\exists\lambda\in\mathbb{R},\,\mathrm{such\, that}\,\lambda\mathbf{z}=i\bar{f}\overline{\mathrm{d}}f(\mathbf{z},\overline{\mathbf{z}})-if\overline{\mathrm{d}f}(\mathbf{z},\overline{\mathbf{z}})\right\} .\]
and we call \emph{asymptotic $\rho$-non-regular values} of
$\frac{f}{\left|f\right|}$ the set:\[
S(\varphi)  =\{c\in S^{1}\mathbb{\mid}\exists\,\{\mathbf{z}_{k}\}_{k\in\mathbb{N}}\subset M(\varphi),\,\underset{k\rightarrow\infty}{\lim}\Vert\mathbf{z}_{k}\Vert=\infty\,\\
  \,\mathrm{and}\:\mathrm{}\underset{k\rightarrow\infty}{\lim}\mathit{\varphi(\mathbf{z}_{k},\overline{\mathbf{z}_{k}})=c}\}.\]
\end{definition}
Recall the notations of Milnor set $M(f)$ and the asymptotic $\rho$-non-regular set $S(f)$.
The above definition enables us to obtain the following structure result of $S(\varphi)$.
\begin{lemma}
\label{lem:sag}$S(\varphi)$ is semi-algebraic and $M(\varphi)\subset M(f)\setminus V(f)$.
\end{lemma}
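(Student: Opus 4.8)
The plan is to prove the two assertions of Lemma~\ref{lem:sag} separately, starting with the easier inclusion $M(\varphi)\subset M(f)\setminus V(f)$ and then deducing the semi-algebraicity of $S(\varphi)$ from it together with standard facts.

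For the inclusion, take $\mathbf{z}\in M(\varphi)$, so $\mathbf{z}\notin V(f)$ and there is $\lambda\in\mathbb{R}$ with $\lambda\mathbf{z}=i\bar f\,\overline{\mathrm{d}}f(\mathbf{z},\overline{\mathbf{z}})-if\overline{\mathrm{d}f}(\mathbf{z},\overline{\mathbf{z}})$. I want to exhibit $\lambda'\in\mathbb{R}$ and $\mu\in\mathbb{C}^{*}$ with $\lambda'\mathbf{z}=\mu\overline{\mathrm{d}f}(\mathbf{z},\overline{\mathbf{z}})+\overline{\mu}\,\overline{\mathrm{d}}f(\mathbf{z},\overline{\mathbf{z}})$, which is exactly membership in $M(f)$ (Definition~\ref{d:milnorset}). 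Rewriting the defining equation of $M(\varphi)$ as $\lambda\mathbf{z}=(i\bar f)\,\overline{\mathrm{d}}f(\mathbf{z},\overline{\mathbf{z}})+\overline{(i\bar f)}\,\overline{\mathrm{d}f}(\mathbf{z},\overline{\mathbf{z}})$ — using that $\overline{i\bar f}=-if$ — I see that the choice $\mu:=i\bar f\in\mathbb{C}^{*}$ (nonzero since $\mathbf{z}\notin V(f)$) and $\lambda':=\lambda$ does the job, once I match the roles of $\overline{\mathrm{d}f}$ and $\overline{\mathrm{d}}f$ in the two definitions. So the content here is just careful bookkeeping with the conjugation identities for $\mathrm{d}\varphi$ and $\overline{\mathrm{d}}\varphi$ already established in the preceding lemma; there is no real obstacle.

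For the semi-algebraicity of $S(\varphi)$, the set $M(\varphi)$ is semi-algebraic: it is the projection to the $\mathbf{z}$-coordinates of the semi-algebraic subset of $\mathbb{C}^{n}\times\mathbb{R}$ cut out by the polynomial (in the real and imaginary parts of $\mathbf{z}$ and in $\lambda$) equations $\lambda\mathbf{z}=i\bar f\,\overline{\mathrm{d}}f-if\overline{\mathrm{d}f}$ together with the open condition $f(\mathbf{z},\overline{\mathbf{z}})\neq 0$, and projection preserves semi-algebraicity by Tarski--Seidenberg. Then $S(\varphi)$ is the set of limits on $S^{1}$ of values $\varphi(\mathbf{z}_k,\overline{\mathbf{z}_k})$ along sequences in $M(\varphi)$ going to infinity; equivalently, writing $\Phi:M(\varphi)\to S^{1}\times\mathbb{R}$, $\Phi(\mathbf{z})=(\varphi(\mathbf{z},\overline{\mathbf{z}}),\|\mathbf{z}\|)$, one has $c\in S(\varphi)$ iff $(c,+\infty)$ lies in the closure of $\Phi(M(\varphi))$ — a condition that can be phrased with the semi-algebraic graph of $\Phi$ by intersecting with $\|\mathbf{z}\|\geq R$ and letting $R\to\infty$. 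The image and the closure of a semi-algebraic set are semi-algebraic, so $S(\varphi)$ is semi-algebraic; concretely one can take $S(\varphi)=\bigcap_{R>0}\overline{\varphi\big(M(\varphi)\cap\{\|\mathbf{z}\|\geq R\}\big)}$, a decreasing intersection of semi-algebraic subsets of the compact $S^1$, hence semi-algebraic (and in fact a finite intersection suffices). The only mild subtlety is checking that this ``limit at infinity'' description of $S(\varphi)$ genuinely matches Definition~\ref{d:phi} and is expressible within the semi-algebraic category; this is routine once one uses the curve selection lemma to replace sequences by semi-algebraic arcs, but it is the step that needs the most care to state cleanly.

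I expect the main obstacle, such as it is, to be purely organizational: making the conjugation identity $\overline{i\bar f}=-if$ interact correctly with the asymmetry between $\mathrm{d}f$ and $\overline{\mathrm{d}}f$ in the definitions of $M(f)$ and $M(\varphi)$, so that the displayed equation for $M(\varphi)$ is literally an instance of the defining condition of $M(f)$ with $\mu=i\bar f$. Everything else is an application of the Tarski--Seidenberg theorem and elementary properties of semi-algebraic sets (projection, image, closure), with the curve selection lemma invoked to justify the passage between the sequential definition of $S(\varphi)$ and its semi-algebraic reformulation.
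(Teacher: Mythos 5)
Your approach is essentially the same as the paper's, which merely observes that the inclusion is immediate from the two definitions and that semi-algebraicity of $S(\varphi)$ follows by the same argument as for $S(f)$ in \cite[Proposition 2.1]{CT} (Tarski--Seidenberg plus a compactification-at-infinity/closure argument). Two small corrections: the witness for membership in $M(f)$ is $\mu=-if$, not $\mu=i\bar f$. Indeed, comparing $\lambda\mathbf{z}=\mu\,\overline{\mathrm{d}f}+\overline{\mu}\,\overline{\mathrm{d}}f$ with $\lambda\mathbf{z}=i\bar f\,\overline{\mathrm{d}}f-if\,\overline{\mathrm{d}f}$ forces $\mu=-if$ and $\overline{\mu}=i\bar f$ simultaneously, which is consistent precisely because $\overline{-if}=i\bar f$; the value you named is the conjugate of the right one. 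Also, the parenthetical ``in fact a finite intersection suffices'' is not justified and in general false (a nested decreasing semi-algebraic family of arcs in $S^1$ need not stabilize); fortunately it is also unnecessary, since your other formulation --- taking the closure of the semi-algebraic set $\left\{\left(\varphi(\mathbf{z},\overline{\mathbf{z}}),\, (1+\Vert\mathbf{z}\Vert)^{-1}\right): \mathbf{z}\in M(\varphi)\right\}$ in $S^1\times[0,1]$ and intersecting with $S^1\times\{0\}$ --- directly yields semi-algebraicity of $S(\varphi)$ without any intersection over $R$, and this is exactly the mechanism of \cite[Proposition 2.1]{CT}.
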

\begin{proof}
The inclusion of $M(\varphi)\subset M(f)\setminus V(f)$ follows from the definitions of \ref{d:milnorset} and \ref{d:phi}.
Since $M(\varphi)$ is a semi-algebraic set, we now proceed analogously to the proof of \cite[Proposition 2.1]{CT}
and we see that $S(\varphi)$ is semi-algebraic.
\end{proof}
Our next proposition shows that under some homogeneous condition, $\Sing\varphi$ could be equal to $M(\varphi)$.
\begin{proposition}\label{pro:homogeneous}
If $f$ is a mixed radial weighted homogeneous polynomial and not constant, then $\Sing\varphi=\Sing f\setminus V(f)=M(\varphi)$.
\end{proposition}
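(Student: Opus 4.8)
The main tool is the Euler identity for radial weighted homogeneous polynomials. Writing $f$ of radial type $(q_{1},\ldots,q_{n};m_{r})$ and differentiating the defining relation $f(t\circ\mathbf{z})=t^{m_{r}}f(\mathbf{z},\overline{\mathbf{z}})$ with respect to $t\in\mathbb{R}$ at $t=1$ (both sides being polynomials in $t$ that agree on $\mathbb{R}^{*}$), one gets
\[
\sum_{j=1}^{n}q_{j}z_{j}\frac{\partial f}{\partial z_{j}}+\sum_{j=1}^{n}q_{j}\overline{z}_{j}\frac{\partial f}{\partial\overline{z}_{j}}=m_{r}f
\]
on all of $\mathbb{C}^{n}$; since $m_{r}\geq1$, evaluating at the origin gives $f(0)=0$, so $0\in V(f)$ and in particular every point of $M(\varphi)$ is nonzero. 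I would first dispose of the inclusions needing no homogeneity: if $\mathbf{z}\in\Sing\varphi$, then by the formula $\Sing\varphi=\{\mathbf{z}\in\mathbb{C}^{n}\setminus V(f)\mid\overline{f}\,\overline{\mathrm{d}}f=f\overline{\mathrm{d}f}\}$ established above we have $\overline{\mathrm{d}f}=(\overline{f}/f)\overline{\mathrm{d}}f$ with $|\overline{f}/f|=1$, so Proposition \ref{pro:singularity} gives $\mathbf{z}\in\Sing f\setminus V(f)$; and taking $\lambda=0$ in Definition \ref{d:phi} gives $\Sing\varphi\subseteq M(\varphi)$. Hence it suffices to prove $M(\varphi)\subseteq\Sing\varphi$ and $\Sing f\setminus V(f)\subseteq\Sing\varphi$, and then all three sets coincide.

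For $M(\varphi)\subseteq\Sing\varphi$, let $\mathbf{z}\in M(\varphi)$, so $\lambda\mathbf{z}=i\overline{f}\,\overline{\mathrm{d}}f(\mathbf{z},\overline{\mathbf{z}})-if\overline{\mathrm{d}f}(\mathbf{z},\overline{\mathbf{z}})$ for some $\lambda\in\mathbb{R}$. The plan is to take the $j$-th component of this vector identity, multiply it by $q_{j}\overline{z}_{j}$, and sum over $j$. The left side becomes $\lambda\sum_{j}q_{j}|z_{j}|^{2}$, a real number, nonzero unless $\lambda=0$ (the weights $q_{j}$ are positive and $\mathbf{z}\neq0$). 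Writing $Q=\sum_{j}q_{j}\overline{z}_{j}\partial_{\overline{z}_{j}}f$, the Euler identity gives $\sum_{j}q_{j}z_{j}\partial_{z_{j}}f=m_{r}f-Q$, hence $\sum_{j}q_{j}\overline{z}_{j}\overline{\partial_{z_{j}}f}=m_{r}\overline{f}-\overline{Q}$, and the right side collapses to $i\overline{f}Q-if(m_{r}\overline{f}-\overline{Q})=i\bigl(2\,\mathrm{Re}(\overline{f}Q)-m_{r}|f|^{2}\bigr)$, which is purely imaginary. A real number equal to a purely imaginary number is $0$, so $\lambda=0$, i.e. $\overline{f}\,\overline{\mathrm{d}}f=f\overline{\mathrm{d}f}$ and $\mathbf{z}\in\Sing\varphi$; combined with $\Sing\varphi\subseteq M(\varphi)$ this gives $M(\varphi)=\Sing\varphi$.

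For $\Sing f\setminus V(f)\subseteq\Sing\varphi$, let $\mathbf{z}$ be such a point; by Proposition \ref{pro:singularity} there is $\lambda_{0}$ with $|\lambda_{0}|=1$ and $\overline{\mathrm{d}f}=\lambda_{0}\overline{\mathrm{d}}f$, equivalently $\partial_{z_{j}}f=\overline{\lambda_{0}}\,\overline{\partial_{\overline{z}_{j}}f}$. Substituting into the Euler identity and writing again $Q=\sum_{j}q_{j}\overline{z}_{j}\partial_{\overline{z}_{j}}f$ gives $\overline{\lambda_{0}}\,\overline{Q}+Q=m_{r}f$; conjugating gives $\lambda_{0}Q+\overline{Q}=m_{r}\overline{f}$, and multiplying the first relation by $\lambda_{0}$ and using $|\lambda_{0}|=1$ yields $\overline{Q}+\lambda_{0}Q=\lambda_{0}m_{r}f$, so comparing with the second relation forces $\overline{f}=\lambda_{0}f$. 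Therefore $\overline{f}\,\overline{\mathrm{d}}f=\lambda_{0}f\,\overline{\mathrm{d}}f=f\,\overline{\mathrm{d}f}$, whence $\mathbf{z}\in\Sing\varphi$, closing the argument. The routine inclusions are immediate; the two Euler-identity computations are the substance, and the one point requiring care is the ``real $=$ imaginary $\Rightarrow$ both vanish'' step for $M(\varphi)$, where radial (as opposed to polar) weighted homogeneity is exactly what makes the pairing with $\sum_{j}q_{j}\overline{z}_{j}(\cdot)_{j}$ reproduce the Euler identity. I expect that step to be the only real obstacle.
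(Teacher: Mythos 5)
Your proof is correct, and the two central computations (pairing the defining relation of $M(\varphi)$, resp.\ the $\Sing f$ relation, against $\sum_j q_j\overline{z}_j(\cdot)_j$ and invoking the Euler identity) are the same engine the paper runs. Where you genuinely diverge is in how you close the cycle of inclusions: the paper proves $\Sing f\setminus V(f)\subset\Sing\varphi$ explicitly (via $A=\sum q_i a_i\partial_{z_i}f$, $B=\sum q_i\overline a_i\partial_{\overline z_i}f$, the relation $\overline A=\lambda B$, and the cancellation trick requiring $AB\neq0$), but then dispatches the remaining inclusion $M(\varphi)\subset\Sing f\setminus V(f)$ by citing the Euler-vector-field argument of \cite[Proposition 3.2]{ACT}. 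You instead prove $M(\varphi)\subset\Sing\varphi$ directly, and the real-versus-imaginary observation --- that the pairing turns the left side into $\lambda\sum q_j|z_j|^2\in\mathbb{R}$ and the right side into $i(2\mathrm{Re}(\overline f Q)-m_r|f|^2)\in i\mathbb{R}$, forcing $\lambda=0$ --- is exactly the right self-contained replacement for that external reference. Your treatment of $\Sing f\setminus V(f)\subset\Sing\varphi$ is also marginally tighter: by multiplying $\overline{\lambda_0}\overline Q+Q=m_r f$ by $\lambda_0$ and comparing with its conjugate you get $\overline f=\lambda_0 f$ without ever needing $Q\neq0$, sidestepping the paper's $AB\neq0$ verification. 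One small caveat applies to both your argument and the paper's: you use that the radial weights $q_j$ are positive (so $\sum q_j|z_j|^2>0$ for $\mathbf z\neq0$). The paper's Definition~\ref{def:radially weighted} literally only asks that the $q_j$ be integers, but this is a known imprecision inherited from the statement; Oka's original definition (and the one tacitly in force here) does require $q_j>0$, so your assumption is the intended one.
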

\begin{proof}
Let us denote the radial weights of $f$ by $q_{1},\cdots,q_{n}$ and the radial degree of $f$ by $m_{r}$, where $q_{1},\cdots,q_{n}\in\mathbb{Z}$
and $m_{r}\neq0$. First, we have $\Sing\varphi\subset\Sing f\setminus V(f)$ and $\Sing\varphi\subset M(\varphi)$. To prove the equality, let $\mathbf{a}\in\Sing f$ and $f(\mathbf{a},\overline{\mathbf{a}})\neq0$.
Therefore $\exists\lambda\in S^{1}$ such that for $1\leq i\leq n$:
\begin{equation}
\overline{\frac{\partial f}{\partial z_{i}}}(\mathbf{a},\overline{\mathbf{a}})=\lambda\frac{\partial f}{\partial\overline{z}_{i}}(\mathbf{a},\overline{\mathbf{a}}).\label{eq:5-3}
\end{equation}
Since $f$ is radial weighted homogeneous, by Euler's lemma, we have:
\begin{equation}
\sum_{i=1}^{n}q_{i}a_{i}\frac{\partial f}{\partial z_{i}}(\mathbf{a},\overline{\mathbf{a}})+\sum_{i=1}^{n}q_{i}\overline{a}_{i}\frac{\partial f}{\partial\overline{z}_{i}}(\mathbf{a},\overline{\mathbf{a}})=m_{r}f(\mathbf{a},\overline{\mathbf{a}}).\label{eq:5-4}
\end{equation}
Let $A=\sum_{i=1}^{n}q_{i}a_{i}\frac{\partial f}{\partial z_{i}}(\mathbf{a},\overline{\mathbf{a}})$
and $B=\sum_{i=1}^{n}q_{i}\overline{a}_{i}\frac{\partial f}{\partial\overline{z}_{i}}(\mathbf{a},\overline{\mathbf{a}})$.
Multiplying \eqref{eq:5-3} by $q_{i}\overline{a}_{i}$, we obtain:
\begin{equation}
\overline{A}=\lambda B\label{eq:5-5}
\end{equation}
which implies $A\overline{A}=B\overline{B}$ since $\lambda\in S^{1}$.
From \eqref{eq:5-4}, \eqref{eq:5-5} and $f(\mathbf{a},\overline{\mathbf{a}})\neq0$, we therefore get $AB\neq0$.
Consequently,
\begin{alignat}{1}
\frac{\overline{f(\mathbf{a},\overline{\mathbf{a}})}}{f(\mathbf{a},\overline{\mathbf{a}})} & =\frac{\overline{A}+\overline{B}}{A+B}=\frac{B\overline{A}+B\overline{B}}{B(A+B)}=\frac{B\overline{A}+A\overline{A}}{B(A+B)}=\lambda
\end{alignat}
which proves that $\mathbf{a}\in\Sing\varphi$ from \ref{eq:5-3}.
Thus, we have $\Sing\varphi=\Sing f\setminus V(f)$. Using Euler vector field as in the proof of \cite[Proposition 3.2]{ACT},
we have $M(\varphi)\subset\Sing f\setminus V(f)$. This finishes the proof.
\end{proof}
For simplicity of notation, we write $\varphi_{\triangle}:=\frac{f_{\triangle}}{\left|f_{\triangle}\right|}$ for the restriction of $\frac{f}{\left|f\right|}$,
where $\triangle$ is a face of $\overline{\mathrm{supp}(f)}$.
\begin{theorem}\label{thm:appro}
If $f$ is Newton strongly non-degenerate at infinity for any face of $\overline{\mathrm{supp}(f)}$, then $M(\varphi)$ is bounded and $S(\varphi)=\emptyset.$
\end{theorem}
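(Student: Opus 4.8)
The plan is to prove that $M(\varphi)$ is bounded; the equality $S(\varphi)=\emptyset$ is then automatic, since by Definition \ref{d:phi} any $c\in S(\varphi)$ is a limit of $\varphi$ along a sequence of points of $M(\varphi)$ whose norms tend to infinity, and no such sequence exists once $M(\varphi)$ is bounded. So I would argue by contradiction: assume $M(\varphi)$ unbounded. Since $M(\varphi)$ is semi-algebraic (Lemma \ref{lem:sag}), the curve selection lemma at infinity provides a real analytic arc $\mathbf{z}\colon(0,\varepsilon)\to M(\varphi)$ with $\Vert\mathbf{z}(t)\Vert\to\infty$ as $t\to 0$, together with a real analytic function $\lambda(t)$ so that
\[
\lambda(t)\,\mathbf{z}(t)=i\,\overline{f}\,\overline{\mathrm{d}}f(\mathbf{z}(t),\overline{\mathbf{z}(t)})-i\,f\,\overline{\mathrm{d}f}(\mathbf{z}(t),\overline{\mathbf{z}(t)})
\]
holds identically in $t$.

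Next I would run the usual Newton polyhedron analysis of this arc. Put $I=\{j:z_{j}\not\equiv 0\}$ and expand $z_{j}(t)=a_{j}t^{p_{j}}+\cdots$ with $a_{j}\in\mathbb{C}^{*}$ and $p_{j}\in\mathbb{Q}$ for $j\in I$; some $p_{j}$ is negative since $\Vert\mathbf{z}(t)\Vert\to\infty$. Because the arc avoids $V(f)$, the restriction of $f$ to the coordinate subspace $\{z_{j}=0:j\notin I\}$ is not identically zero, hence (as $0\notin\overline{\mathrm{supp}(f)}$) a non-constant mixed polynomial in the $I$-variables — unless it is a non-zero constant, a degenerate situation in which the identity above forces $\lambda\equiv 0$ and the arc to run inside $\Sing\varphi$ on a coordinate subspace, which I would treat separately. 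In the principal case, let $\Delta$ be the face of $\overline{\mathrm{supp}(f)}$ obtained by intersecting with the coordinate face $\{x_{j}=0:j\notin I\}$ and then selecting the points of smallest value $d$ of the linear form $x\mapsto\langle(p_{j})_{j\in I},x\rangle$. If $d\neq 0$ the face polynomial $f_{\Delta}$ is radial weighted homogeneous (rescale $-(p_{j})$ to coprime positive integers, of positive degree $-d$) and non-constant, so Proposition \ref{pro:homogeneous} applies and gives $M(\varphi_{\Delta})=\Sing\varphi_{\Delta}=\Sing f_{\Delta}\setminus V(f_{\Delta})$.

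I would then substitute the expansions into the displayed identity and compare lowest order terms coordinate by coordinate, using that $f(\mathbf{z}(t))=f_{\Delta}(\mathbf{a})\,t^{d}+\cdots$, that the leading parts of $\partial f/\partial z_{j}$ and $\partial f/\partial\overline{z}_{j}$ along the arc are $\partial f_{\Delta}/\partial z_{j}$ and $\partial f_{\Delta}/\partial\overline{z}_{j}$ evaluated at the leading coefficient vector $\mathbf{a}$, and that $\lambda(t)$ has a well-defined $t$-order. The outcome I expect is that $\mathbf{a}$, restricted to the coordinates occurring on $\Delta$, satisfies the homogeneous analogue
\[
\lambda_{0}\,\mathbf{a}=i\,\overline{f_{\Delta}(\mathbf{a})}\,\overline{\mathrm{d}}f_{\Delta}(\mathbf{a})-i\,f_{\Delta}(\mathbf{a})\,\overline{\mathrm{d}f_{\Delta}}(\mathbf{a})
\]
for some $\lambda_{0}\in\mathbb{R}$, i.e. $\mathbf{a}\in M(\varphi_{\Delta})$. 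By Proposition \ref{pro:homogeneous} this forces $\mathbf{a}\in\Sing f_{\Delta}$, and filling in the coordinates of $\mathbf{a}$ not occurring on $\Delta$ with arbitrary non-zero values — on which $f_{\Delta}$ and all its derivatives vanish identically — we obtain $\Sing f_{\Delta}\cap\mathbb{C}^{*n}\neq\emptyset$. This contradicts the hypothesis that $f$ is Newton strongly non-degenerate on $\Delta$, and the contradiction shows $M(\varphi)$ is bounded.

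The core difficulty, and the main obstacle, lies entirely in the "compare lowest order terms" step. Tracking simultaneously the $t$-orders of $\lambda(t)$, of $f$, and of all partial derivatives, one finds that the leading term of a given coordinate of the identity need not be governed by $f_{\Delta}$ alone: it can happen that $f_{\Delta}(\mathbf{a})=0$, or that $\partial f_{\Delta}/\partial z_{j}(\mathbf{a})=\partial f_{\Delta}/\partial\overline{z}_{j}(\mathbf{a})=0$ for some coordinate $j$ of $\Delta$, so that the effective leading contribution of that coordinate sits at strictly higher order; moreover, when $\Delta$ is a bad face — its affine span meeting the origin, so that $d=0$ — the polynomial $f_{\Delta}$ is only weighted homogeneous of degree zero and Proposition \ref{pro:homogeneous} does not apply verbatim. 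In these configurations one must refine the argument: pass to a proper sub-face of $\Delta$ (still a face of $\overline{\mathrm{supp}(f)}$, hence still covered by the hypothesis) or iterate the Puiseux expansion to a later coefficient, in the spirit of the N\'emethi--Zaharia scheme; for bad faces, one exploits the $S^{1}$-twist directly, comparing the orders of two coordinates to pin down the relevant phase and thereby rule out escape to infinity. Organizing this descent so that it terminates, disposing of the degenerate coordinate-subspace case above, and checking that the reality of $\lambda(t)$ survives the passage to leading order without obstructing the homogeneous equation, is where the work concentrates. The reason no condition on bad faces — as needed in the holomorphic semitame setting — has to be added is precisely that $f$ is assumed strongly non-degenerate for \emph{every} face of $\overline{\mathrm{supp}(f)}$, so that every face met in this process is available.
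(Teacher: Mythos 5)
The skeleton of your argument — proof by contradiction, curve selection at infinity, Puiseux expansion of the arc, face analysis of $\overline{\mathrm{supp}(f)}$, and a final contradiction with strong non-degeneracy — matches the paper. But you have correctly located the ``core difficulty'' and then left it unresolved: the paper's proof lives entirely in the step you call ``compare lowest order terms,'' and neither of your two fallback suggestions (passing to a sub-face, iterating the Puiseux expansion) is what makes it close.

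The mechanism the paper uses is a \emph{derivative trick}, and it is what you are missing. Expanding the $M(\varphi)$-equation coordinatewise, one does \emph{not} get $\mathbf{a}\in M(\varphi_{\Delta})$: the order of the $i$-th coordinate of the right-hand side is $\delta + d_{\mathbf P}-p_i$, which depends on $i$, while the left-hand side has order $\gamma+p_i$, so the comparison splits the index set into a ``resonance'' set $J=\{j: d_{\mathbf P}-p_j+\delta = p_j+\gamma\}$ (equivalently the set of $j$ with $p_j$ minimal), where the leading term is $\lambda_0 a_j\neq 0$, and its complement, where the leading derivative of $f_{\Delta}^I$ must vanish. Only in the degenerate case $J=I$ (all $p_j$ equal) and $d_{\mathbf P}=\delta$ would you actually obtain an $M(\varphi_\Delta)$ relation. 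What the paper does instead is suppose $J\neq\emptyset$, pair the $M(\varphi)$-equation against $\mathbf{Pa}$ to extract the quantity $\mathrm{Re}\langle\mathbf{Pa}, i\overline{b}\,\overline{\mathrm{d}}f_{\Delta}^I(\mathbf a,\overline{\mathbf a}) - ib\,\overline{\mathrm{d}f_{\Delta}^I}(\mathbf a,\overline{\mathbf a})\rangle = \sum_{j\in J}\lambda_0\, p\,\|a_j\|^2 \neq 0$, and then compute $\tfrac{\mathrm d}{\mathrm dt}f(\mathbf z(t),\overline{\mathbf z}(t))$ a second way — by the chain rule, whose leading coefficient at order $t^{d_{\mathbf P}-1}$ is exactly $\langle\mathbf{Pa},\overline{\mathrm{d}f_{\Delta}^I}\rangle + \langle\mathbf{P}\overline{\mathbf a},\overline{\overline{\mathrm d}f_{\Delta}^I}\rangle$, versus from the Puiseux expansion $f=bt^{\delta}+\cdots$, whose coefficient at that order is $0$ (if $d_{\mathbf P}<\delta$) or $b\delta$ (if $d_{\mathbf P}=\delta$). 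Multiplying by $i\overline b$ and taking real parts yields $0$ in both cases, contradicting the nonzero sum above. Hence $J=\emptyset$, and then the $M(\varphi)$-equation directly gives $\overline{\mathrm{d}f_{\Delta}^I}(\mathbf a,\overline{\mathbf a}) = \tfrac{b}{\overline b}\,\overline{\mathrm d}f_{\Delta}^I(\mathbf a,\overline{\mathbf a})$ with $|b/\overline b|=1$, i.e. $\mathbf a\in\Sing f_{\Delta}^I\cap\mathbb C^{*I}$ by Proposition \ref{pro:singularity}, contradicting strong non-degeneracy. The case $\lambda\equiv 0$ and the case $\lim f=c\in\mathbb C^*$ go the same way with minor changes.

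Two further consequences of the missing step. First, your route through Proposition \ref{pro:homogeneous} is not needed — the conclusion $\mathbf a\in\Sing f_\Delta^I$ comes straight from $J=\emptyset$ via Proposition \ref{pro:singularity}, with no homogeneity argument. Second, and more seriously, that route could not have worked uniformly anyway: when $d_{\mathbf P}=0$ the face polynomial is radial weighted homogeneous only of degree zero, which is excluded by Definition \ref{def:radially weighted}, so Proposition \ref{pro:homogeneous} is simply unavailable in exactly the bad-face case you flagged. The derivative trick is insensitive to the sign or vanishing of $d_{\mathbf P}$ (it only distinguishes $d_{\mathbf P}<\delta$ from $d_{\mathbf P}=\delta$) and therefore disposes of bad faces and non-bad faces together, which is why the paper needs no hypothesis on bad faces in Theorem \ref{thm:appro}.
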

\begin{proof}
Assume that $M(\varphi)$ is not bounded, then by Curve selection lemma at infinity (\cite[Lemma 2]{NZ2}, \cite[Lemma 2.3]{CT}),
there exists $\mathbf{z}(t)$ of $M(\varphi)$ a real analytic path
defined on a small enough interval $\left]0,\varepsilon\right[$ such
that  \[
\underset{t\rightarrow0}{\lim\,}\Vert\mathbf{z}(t)\Vert=\infty.\]
Since $\mathbf{z}(t)\subset M(\varphi)$, there exists a real analytic
curve $\lambda(t)$, such that for $t\in\left]0,\varepsilon\right[$
we have:
\begin{equation}
\lambda(t)\mathbf{z}(t)=i\bar{f}\,\overline{\mathrm{d}}f(\mathbf{z}(t),\overline{\mathbf{z}}(t))-if\overline{\mathrm{d}f}(\mathbf{z}(t),\overline{\mathbf{z}}(t)).\label{eq:5-6}
\end{equation}
Suppose here $\lambda(t)\not\equiv0$ and let $I=\left\{ i\mid z_{i}(t)\not\not\equiv0\right\}$.
Then $I\neq\emptyset$ since $\underset{t\rightarrow0}{\lim}\Vert\mathbf{z}(t)\Vert=\infty$.
Assuming that $I=\left\{ 1,\ldots,m\right\} $, we write the expansions
of $f(\mathbf{z}(t),\overline{\mathbf{z}}(t))$, $\mathbf{z}(t)$
and $\lambda(t)$ explicitly as follows:
\begin{alignat}{1}
z_{i}(t) & =a_{i}t^{p_{i}}+\mathrm{h.o.t.,\qquad\mathit{\mathrm{where\,}a_{i}\neq}0,\,}p_{i}\in\mathbb{Z},\,1\leq i\leq m.\label{eq:5-7}\\
f(\mathbf{z}(t),\overline{\mathbf{z}}(t)) & =
\begin{cases}
bt^{\delta}+\mathrm{h.o.t.},\qquad\mathrm{where}\, b\in\mathbb{C}^{*},\,\delta\neq0, & \mathrm{if}\,\underset{t\rightarrow0}{\lim\,}f(\mathbf{z}(t),\overline{\mathbf{z}}(t))=0\,\mathrm{or}\,\infty\\
c+bt^{\delta}+\mathrm{h.o.t.}\qquad\mathrm{where}\, c,b\in\mathbb{C}^{*},\,\delta\neq0, & \mathrm{if}\,\underset{t\rightarrow0}{\lim\,}f(\mathbf{z}(t),\overline{\mathbf{z}}(t))=c.
\end{cases}\label{eq:5-8}\\
\lambda(t) & =\lambda_{0}t^{\gamma}+\mathrm{h.o.t.,\qquad where\,\lambda_{0}\in\mathbb{R}}^{*},\,\gamma\in\mathbb{Z},\,\lambda(t)\in\mathbb{R}.\label{eq:5-9}
\end{alignat}
Set $\mathbf{a}=(a_{1},\ldots,a_{m})\in\mathbb{C}^{*I}$, $\mathbf{P}=(p_{1},\ldots,p_{m})\in\mathbb{R}^{m}$
and consider the linear function $l_{\mathbf{P}}=\sum_{i=1}^{m}p_{i}x_{i}$
defined on $\overline{\mathrm{supp(\mathit{f}^{I})}}$. Let $\triangle$
be the \emph{maximal face} of $\overline{\mathrm{supp(\mathit{f}^{I})}}$
where $l_{\mathbf{P}}$ takes its minimal value, say this value is
$d_{\mathbf{P}}$. We have:
\begin{equation}
f(\mathbf{z}(t),\overline{\mathbf{z}}(t))=f_{\triangle}^{I}(\mathbf{a},\overline{\mathbf{a}})t^{d_{\mathbf{P}}}+\mathrm{h.o.t.}\label{eq:5-10}
\end{equation}
Let us discuss the following two cases:

(I). If $\underset{t\rightarrow0}{\lim}\, f(\mathbf{z}(t),\overline{\mathbf{z}}(t))=0$
or $\infty$, we get $d_{\mathbf{P}}\leq\delta$. Since $\underset{t\rightarrow0}{\lim}\Vert\mathbf{z}(t)\Vert=\infty$,
this implies $p:=\underset{j\in I}{\min}\{p_{j}\}<0$. Now using \eqref{eq:5-7}-\eqref{eq:5-10} in \eqref{eq:5-6},
we get:
\begin{equation}
i\overline{b}\frac{\partial f_{\triangle}^{I}}{\partial\overline{z}_{i}}(\mathbf{a},\overline{\mathbf{a}})-ib\frac{\overline{\partial f_{\triangle}^{I}}}{\partial z_{i}}(\mathbf{a},\overline{\mathbf{a}})=
\begin{cases}
\lambda_{0}a_{i}, & \quad\mathrm{if}\, d_{\mathbf{P}}-p_{i}+\delta=p_{i}+\gamma.\\
\\0, & \quad\mathbf{\mathrm{if}}\, d_{\mathbf{P}}-p_{i}+\delta<p_{i}+\gamma.
\end{cases}\label{eq:5-11}
\end{equation}
Let $J=\left\{ j\mid d_{\mathbf{P}}-p_{j}+\delta=p_{j}+\gamma\right\}$.
We suppose $J\neq\emptyset$ which gives $J=\{j\mid p_{j}=p=\underset{1\leq j\leq m}{\min}\{p_{j}\}<0\}$.
Consider the derivative of $f(\mathbf{z}(t),\overline{\mathbf{z}}(t))$ with respect to
$t$. On one hand, we have:
\begin{equation}
\frac{\mathrm{d}f(\mathbf{z}(t),\overline{\mathbf{z}}(t))}{\mathrm{d}t}=b\delta t^{\delta-1}+\mathrm{h.o.t}.\label{eq:5-12}
\end{equation}
On the other hand, we have:
\begin{alignat}{1}
\frac{\mathrm{d}f(\mathbf{z}(t),\overline{\mathbf{z}}(t))}{\mathrm{d}t} & ={\displaystyle \sum_{i=1}^{m}(\frac{\partial f}{\partial z_{i}}\cdot\frac{\partial z_{i}}{\partial t}+\frac{\partial f}{\partial\overline{z_{i}}}\cdot\frac{\partial\overline{z_{i}}}{\partial t})}\label{eq:5-13}\\
 & =\left[\left\langle \mathbf{Pa},\overline{\mathrm{d}f_{\triangle}^{I}}(\mathbf{a},\overline{\mathbf{a}})\right\rangle +\left\langle \mathbf{P}\mathbf{\overline{\mathbf{a}}},\overline{\bar{\mathrm{d}}f_{\triangle}^{I}(\mathbf{a},\overline{\mathbf{a}})}\right\rangle \right]t^{d_{\mathbf{P}}-1}+\mathrm{h.o.t}.\nonumber
\end{alignat}
where $\mathbf{Pa}=(p_{1}a_{1},\ldots,p_{m}a_{m})$. From \eqref{eq:5-11},
we obtain:
\begin{equation}
\mathrm{Re}\left\langle \mathbf{P}\mathbf{a},i\overline{b}\bar{\mathrm{d}}f_{\triangle}^{I}(\mathbf{a},\overline{\mathbf{a}})-ib\overline{\mathrm{d}f_{\triangle}^{I}}(\mathbf{a},\overline{\mathbf{a}})\right\rangle =\sum_{i\in J}\lambda_{0}p\Vert a_{j}\Vert^{2}\neq0.\label{eq:5-14}
\end{equation}
If $d_{\mathbf{P}}<\delta$, then comparing the orders of the expansions
\eqref{eq:5-12} and \eqref{eq:5-13} with respect to $t$, we have $\left\langle \mathbf{Pa},\overline{\mathrm{d}f_{\triangle}^{I}}(\mathbf{a},\overline{\mathbf{a}})\right\rangle +\left\langle \mathbf{P}\mathbf{\overline{\mathbf{a}}},\overline{\bar{\mathrm{d}}f_{\triangle}^{I}(\mathbf{a},\overline{\mathbf{a}})}\right\rangle =0$
and $f_{\triangle}^{I}(\mathbf{a},\overline{\mathbf{a}})=0$. Multiplying \eqref{eq:5-13} by $i\overline{b}$ and comparing the real parts of the equality, we obtain a contradiction with \eqref{eq:5-14}.
If $d_{\mathbf{P}}=\delta$, then by \eqref{eq:5-12}, we have $\mathrm{Re}\left\langle \mathbf{P}\mathbf{a},i\overline{b}\bar{\mathrm{d}}f_{\triangle}^{I}(\mathbf{a},\overline{\mathbf{a}})-ib\overline{\mathrm{d}f_{\triangle}^{I}}(\mathbf{a},\overline{\mathbf{a}})\right\rangle =\mathrm{Re}(i\left|b\right|^{2}\delta)=0$,
which contradicts \eqref{eq:5-14}. It follows that
$J=\emptyset$. Hence $\mathbf{a}\in\mathbb{C}^{*I}$ is a singularity
of $f_{\triangle}^{I}$ and $f_{\triangle}^{I}(\mathbf{a},\overline{\mathbf{a}})=b$.
By \cite[Remark 3.3]{CT}, this is contrary to the strong non-degeneracy of $f^{I}$.

(II). If $\underset{t\rightarrow0}{\lim}\, f(\mathbf{z}(t),\overline{\mathbf{z}}(t))=c\in\mathbb{C}^{*}$,
comparing the orders of the expansions \eqref{eq:5-9} and \eqref{eq:5-11} with respect to $t$, we have $d_{\mathbf{P}}<\delta$.
Now using \eqref{eq:5-8}-\eqref{eq:5-11} in \eqref{eq:5-7}, we get:
\begin{equation}
i\overline{c}\frac{\partial f_{\triangle}^{I}}{\partial\overline{z}_{i}}(\mathbf{a},\overline{\mathbf{a}})-ic\frac{\overline{\partial f_{\triangle}^{I}}}{\partial z_{i}}(\mathbf{a},\overline{\mathbf{a}})=
\begin{cases}
\lambda_{0}a_{i}, & \quad\mathrm{if}\, d_{\mathbf{P}}-p_{i}=p_{i}+\gamma.\\
\\0, & \quad\mathbf{\mathrm{if}}\, d_{\mathbf{P}}-p_{i}<p_{i}+\gamma.
\end{cases}\label{eq:5-15}
\end{equation}
Let $J=\left\{ j\mid d_{\mathbf{P}}-p_{j}=p_{j}+\gamma\right\} $.
We suppose $J\neq\emptyset$ which implies $J=\{j\mid p_{j}=p=\underset{1\leq j\leq m}{\min}\{p_{j}\}<0\}$.
We derive $f(\mathbf{z}(t),\overline{\mathbf{z}}(t))$ with respect to $t$. On one hand, we get \eqref{eq:5-12}. On the other hand, we
have \eqref{eq:5-13}. From \eqref{eq:5-15}, we obtain:
\begin{equation}
\mathrm{Re}\left\langle \mathbf{P}\mathbf{a},i\overline{c}\bar{\mathrm{d}}f_{\triangle}^{I}(\mathbf{a},\overline{\mathbf{a}})-ic\overline{\mathrm{d}f_{\triangle}^{I}}(\mathbf{a},\overline{\mathbf{a}})\right\rangle =\sum_{i\in J}\lambda_{0}p\Vert a_{j}\Vert^{2}\neq0.\label{eq:5-16}
\end{equation}
Since $d_{\mathbf{P}}<\delta$, comparing the orders of with respect to $t$ in \eqref{eq:5-12} and \eqref{eq:5-13}, we have $\left\langle \mathbf{Pa},\overline{\mathrm{d}f_{\triangle}^{I}}(\mathbf{a},\overline{\mathbf{a}})\right\rangle +\left\langle \mathbf{P}\mathbf{\overline{\mathbf{a}}},\overline{\bar{\mathrm{d}}f_{\triangle}^{I}(\mathbf{a},\overline{\mathbf{a}})}\right\rangle =0$.
Multiplying \eqref{eq:5-13} by $i\overline{c}$ and comparing the real parts, we obtain a contradiction with \eqref{eq:5-16}.
It follows that $J=\emptyset$. Hence $\mathbf{a}\in\mathbb{C}^{*I}$ is a singularity of $f_{\triangle}^{I}$ and $f_{\triangle}^{I}(\mathbf{a},\overline{\mathbf{a}})=0$.
By \cite[Remark 3.3]{CT}, this is contrary to the non-degeneracy of $f^{I}$.

In general, if we do not assume the strong non-degeneracy of $f$ and let $\mathbf{A}=(\mathbf{a},1,1,\ldots,1)$ with the $i^{th}$ coordinate $z_{i}=1$ for $i\notin I$, then we
have the following conclusion:
\begin{enumerate}
 \item  If $d_{\mathbf{P}}<\delta$, then $\mathbf{A}$ is a singularity of $V(f_{\triangle})$.
 \item  If $d_{\mathbf{P}}=\delta$, then $\mathbf{A}\in\Sing\varphi=\Sing f_{\triangle}\setminus V(f_{\triangle})$ by Proposition \ref{pro:homogeneous}.
\end{enumerate}
When $\lambda(t)\equiv0$, by comparing the orders with respect to $t$ in \eqref{eq:5-6}, we have:
\begin{equation}
\begin{cases}
i\overline{b}\frac{\partial f_{\triangle}^{I}}{\partial\overline{z}_{i}}(\mathbf{a},\overline{\mathbf{a}})-ib\frac{\overline{\partial f_{\triangle}^{I}}}{\partial z_{i}}(\mathbf{a},\overline{\mathbf{a}})=0, & \mathrm{if}\,\underset{t\rightarrow0}{\lim}f(\mathbf{z}(t),\overline{\mathbf{z}}(t))=0\,\mathrm{or}\,\infty.\\
i\overline{c}\frac{\partial f_{\triangle}^{I}}{\partial\overline{z}_{i}}(\mathbf{a},\overline{\mathbf{a}})-ic\frac{\overline{\partial f_{\triangle}^{I}}}{\partial z_{i}}(\mathbf{a},\overline{\mathbf{a}})=0, & \mathrm{if}\,\underset{t\rightarrow0}{\lim}f(\mathbf{z}(t),\overline{\mathbf{z}}(t))=c\in\mathbb{C}^{*}.
\end{cases}
\label{eq:5-18}
\end{equation}
It follows that $\mathbf{a}\in\mathbb{C}^{*I}$ is a singularity of $f_{\triangle}^{I}$. By \cite[Remark 3.3]{CT}, this is contrary
to the non-degeneracy of $f^{I}$. Hence $M(\varphi)$ is bounded and $S(\varphi)=\emptyset$.
\end{proof}
We now proceed to formulate the analogue of \cite[Theorem 1.1]{CT}.
Recall the notation $\mathfrak{SB}$ the union of strictly bad faces of $\overline{\mathrm{supp}(f)}$.
\begin{theorem}\label{thm:appro1}
Let $f:\mathbb{C}^{n}\rightarrow\mathbb{C}$ be a mixed polynomial. Suppose that $f$ is Newton strongly non-degenerate
polynomial which depends effectively on all the variables. Let $f(0)=0$
and $0\notin S(f)$. Then:
\[
S(\varphi)\subset\underset{\triangle\in\mathfrak{SB}}{\bigcup}\varphi_{\triangle}(\Sing\varphi_{\triangle}\bigcap\mathbb{C}^{*n}).
\]
\end{theorem}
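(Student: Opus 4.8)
The plan is to adapt the curve-selection argument used for Theorem~\ref{thm:appro}: at the faces where the non-degeneracy hypothesis no longer yields a contradiction, I would instead read off that the given value $c_{0}\in S(\varphi)$ is realised by some $\varphi_{\triangle}$ on $\Sing\varphi_{\triangle}\cap\mathbb{C}^{*n}$ with $\triangle$ strictly bad. Concretely, I would fix $c_{0}\in S(\varphi)$ and apply the curve selection lemma at infinity (\cite[Lemma~2]{NZ2}, \cite[Lemma~2.3]{CT}) to the semi-algebraic set $M(\varphi)$, obtaining a real analytic arc $\mathbf{z}(t)$, $t\in\,]0,\varepsilon[$, in $M(\varphi)$ with $\|\mathbf{z}(t)\|\to\infty$ and $\varphi(\mathbf{z}(t),\overline{\mathbf{z}}(t))\to c_{0}$, together with a real analytic $\lambda(t)$ satisfying $\lambda(t)\mathbf{z}(t)=i\bar f\,\bar{\mathrm{d}}f-if\overline{\mathrm{d}f}$ along the arc. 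Since $M(\varphi)\subset M(f)\setminus V(f)$ (Lemma~\ref{lem:sag}) and $0\notin S(f)$, the limit $c:=\lim_{t\to0}f(\mathbf{z}(t),\overline{\mathbf{z}}(t))$ cannot be $0$, and $f(0)=0$ ensures that the coordinate restriction $f^{I}$ below is not a nonzero constant. Then I would introduce, exactly as in the proof of Theorem~\ref{thm:appro}, the index set $I=\{i\mid z_{i}(t)\not\equiv0\}$, the expansions $z_{i}(t)=a_{i}t^{p_{i}}+\mathrm{h.o.t.}$, $\lambda(t)=\lambda_{0}t^{\gamma}+\mathrm{h.o.t.}$, the weight $\mathbf{P}=(p_{i})_{i\in I}$, the maximal face $\triangle$ of $\overline{\mathrm{supp}(f^{I})}$ minimizing $l_{\mathbf{P}}$ with minimal value $d_{\mathbf{P}}$, and $\mathbf{a}=(a_{i})_{i\in I}\in\mathbb{C}^{*I}$; here $p:=\min_{i\in I}p_{i}<0$ because $\|\mathbf{z}(t)\|\to\infty$, and $f(\mathbf{z}(t),\overline{\mathbf{z}}(t))=f_{\triangle}^{I}(\mathbf{a},\overline{\mathbf{a}})\,t^{d_{\mathbf{P}}}+\mathrm{h.o.t.}$ whenever $f_{\triangle}^{I}(\mathbf{a},\overline{\mathbf{a}})\neq0$.

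Next I would run the case analysis of the proof of Theorem~\ref{thm:appro}, with its sub-cases $\lambda(t)\equiv0$ / $\lambda(t)\not\equiv0$ and $d_{\mathbf{P}}<\delta$ / $d_{\mathbf{P}}=\delta$, using the two expressions~\eqref{eq:5-12} and~\eqref{eq:5-13} for $\frac{\mathrm{d}}{\mathrm{d}t}f(\mathbf{z}(t),\overline{\mathbf{z}}(t))$ to discard the alternatives that force an order mismatch. In every surviving sub-case one is left with $\mathbf{a}\in\mathbb{C}^{*I}$ satisfying $\bar f_{\triangle}^{I}\,\bar{\mathrm{d}}f_{\triangle}^{I}(\mathbf{a},\overline{\mathbf{a}})=f_{\triangle}^{I}\,\overline{\mathrm{d}f_{\triangle}^{I}}(\mathbf{a},\overline{\mathbf{a}})$, possibly with $f_{\triangle}^{I}(\mathbf{a},\overline{\mathbf{a}})=0$. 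If $d_{\mathbf{P}}<0$ then $\triangle$ is a face of $\Gamma^{+}(f^{I})$ and, by Proposition~\ref{pro:singularity}, this relation places $\mathbf{a}$ in $\Sing f_{\triangle}^{I}\cap\mathbb{C}^{*I}$, or in $\Sing f_{\triangle}^{I}\cap(f_{\triangle}^{I})^{-1}(0)\cap\mathbb{C}^{*I}$ when $f_{\triangle}^{I}(\mathbf{a},\overline{\mathbf{a}})=0$; both contradict the strong non-degeneracy of $f^{I}$ via \cite[Remark~3.3]{CT}. If $d_{\mathbf{P}}>0$ then $f(\mathbf{z}(t),\overline{\mathbf{z}}(t))\to0$, already excluded. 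Hence $d_{\mathbf{P}}=0$, which forces $f_{\triangle}^{I}(\mathbf{a},\overline{\mathbf{a}})=c\neq0$, and the relation becomes precisely $\mathbf{a}\in\Sing\varphi_{\triangle}^{I}\cap\mathbb{C}^{*I}$ with $\varphi_{\triangle}^{I}(\mathbf{a},\overline{\mathbf{a}})=c/|c|=c_{0}$.

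It remains to promote $\triangle$ to a strictly bad face $\triangle'$ of $\overline{\mathrm{supp}(f)}$. As $d_{\mathbf{P}}=0$, the hyperplane $\{l_{\mathbf{P}}=0\}$ passes through the origin and $\overline{\mathrm{supp}(f^{I})}$ lies in $\{l_{\mathbf{P}}\ge0\}$. Extending $\mathbf{P}$ to a linear form on $\mathbb{R}^{n}$ with a sufficiently large positive coefficient on each coordinate outside $I$ (possible since $\mathrm{supp}(f)$ is finite), one obtains a hyperplane $H\ni0$ with $H\cap\overline{\mathrm{supp}(f)}=\triangle'\supseteq\triangle$; $H$ has coefficients of both signs — a negative one at an index realizing $p<0$, and a positive one either among the coordinates outside $I$ or, when $I=\{1,\dots,n\}$, among the $p_{i}$ themselves (this is where the effective dependence of $f$ on all variables enters, since it prevents all $p_{i}$ being $\le0$ once $d_{\mathbf{P}}=0$). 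Thus $\triangle'\in\mathfrak{B}$, and $d_{\mathbf{P}}=0$ together with the maximality of $\triangle$ forces the affine hull of $\triangle'$ through the origin, so $\triangle'\in\mathfrak{SB}$. Setting $\mathbf{A}=(\mathbf{a},1,\dots,1)$, the components of $\lambda(t)\mathbf{z}(t)=i\bar f\,\bar{\mathrm{d}}f-if\overline{\mathrm{d}f}$ indexed by $i\notin I$ — which vanish identically — together with the analysis already carried out for the indices in $I$ give $\mathbf{A}\in\Sing\varphi_{\triangle'}\cap\mathbb{C}^{*n}$ with $f_{\triangle'}(\mathbf{A},\overline{\mathbf{A}})=f_{\triangle}^{I}(\mathbf{a},\overline{\mathbf{a}})=c$, whence $c_{0}=\varphi_{\triangle'}(\mathbf{A},\overline{\mathbf{A}})\in\varphi_{\triangle'}(\Sing\varphi_{\triangle'}\cap\mathbb{C}^{*n})$, which is the desired inclusion.

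The step I expect to be the main obstacle is this last one: turning the face $\triangle\subseteq\overline{\mathrm{supp}(f^{I})}$ produced by an arc confined to the coordinate subspace $\mathbb{C}^{I}$ into a genuine strictly bad face $\triangle'$ of the full Newton boundary — verifying simultaneously the mixed-sign condition~(a) and the affine-span condition~(ii) of Definition~\ref{d:badface}, and coping with low-dimensional faces on which $f_{\triangle}^{I}$ may degenerate (e.g.\ to a single balanced monomial, making $\varphi_{\triangle}^{I}$ locally constant). Keeping this under control is exactly where the three hypotheses ``$f$ depends effectively on all variables'', ``$f(0)=0$'' and ``$0\notin S(f)$'' have to be used together.
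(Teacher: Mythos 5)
Your proposal follows the same overall route as the paper's proof: curve selection at infinity applied to $M(\varphi)$, the Puiseux-type expansions of $\mathbf{z}(t)$ and $\lambda(t)$, the face $\triangle$ selected by $l_{\mathbf{P}}$ with value $d_{\mathbf{P}}$, the trichotomy in $d_{\mathbf{P}}$, the extension of $\mathbf{P}$ by a large positive coefficient $q$ on the coordinates outside $I$ to realise $\triangle$ as $\overline{\mathrm{supp}(f)}\cap H$, and the use of effective dependence on all variables to guarantee condition (i)(a) when $I=\{1,\dots,n\}$. All of this matches the paper. (Small point: with $q$ large enough the extended hyperplane $H$ satisfies $H\cap\overline{\mathrm{supp}(f)}=\triangle$ exactly, so your $\triangle'$ equals $\triangle$ rather than merely containing it.)

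The one genuine gap is exactly the step you flagged yourself: the claim that ``$d_{\mathbf{P}}=0$ together with the maximality of $\triangle$ forces the affine hull of $\triangle'$ through the origin'' is not a valid deduction. Maximality of $\triangle$ inside $\{l_{\mathbf{P}}=0\}$ does not imply that the affine span of $\triangle$ passes through $0$; for a low-dimensional $\triangle$ (e.g.\ a vertex or an edge) it typically does not. What is true — and what the paper invokes via \cite[Lemma 3.1]{CT} — is a dichotomy: when $d_{\mathbf{P}}=0$ and $\mathbf{P}$ has a negative component, either $0$ lies in the affine span of $\triangle$ (condition~(ii)), or $\triangle$ is already a face of $\Gamma^{+}(f^{I})$. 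The geometric reason is that $\{l_{\mathbf{P}}=0\}\cap\Gamma_{0}(f^{I})=\mathrm{conv}(\{0\}\cup\triangle)$ is a face of $\Gamma_{0}(f^{I})$, and if $0\notin\mathrm{aff}(\triangle)$ then $\triangle$ is a proper face of it, hence a face of $\Gamma_{0}(f^{I})$ not containing the origin, i.e.\ a face of $\Gamma^{+}(f^{I})$. You already used the other half of this dichotomy (``$d_{\mathbf{P}}<0\Rightarrow\triangle$ a face of $\Gamma^{+}$'') without naming the lemma; here the same lemma is what makes the argument close. So at $d_{\mathbf{P}}=0$ you must split into two sub-cases: if $\triangle\subset\Gamma^{+}(f^{I})$, then $\mathbf{a}\in\Sing f_{\triangle}^{I}\cap\mathbb{C}^{*I}$ contradicts the strong non-degeneracy exactly as in the $d_{\mathbf{P}}<0$ case and this alternative is discarded; otherwise the lemma gives $0\in\mathrm{aff}(\triangle)$, which is condition (ii). With that dichotomy made explicit, your proof matches the paper's.
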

\begin{proof}
We use the same notations as in the proof of Theorem \ref{thm:appro}.
For any $c\in S(\varphi)$, by Curve selection Lemma at infinity, there
exists $\mathbf{z}(t)$ of $M(\varphi)$ a real analytic path defined
on a small enough interval $\left]0,\varepsilon\right[$ such that
\[
\underset{t\rightarrow0}{\lim}\Vert\mathbf{z}(t)\Vert=\infty,\,\mathrm{and\,}\underset{t\rightarrow0}{\lim}\varphi(\mathbf{z}(t),\overline{\mathbf{z}}(t))=c_{0}\]
where either $f(\mathbf{z}(t),\overline{\mathbf{z}}(t))=bt^{\delta}+\mathrm{h.o.t}.$
and $d_{\mathbf{P}}\leq\delta<0,c_{0}=\frac{b}{\left|b\right|}$,
or $f(\mathbf{z}(t),\overline{\mathbf{z}}(t))=c+bt^{\delta}+\mathrm{h.o.t.}$
and $d_{\mathbf{P}}\leq0,c\in\mathbb{C}^{*},c_{0}=\frac{c}{\left|c\right|}$.
Consider $\lambda(t)\not\equiv0$, if $\mathrm{ord_{t}}(f(\mathbf{z}(t),\overline{\mathbf{z}}(t)))<0$,
we are in case (I) as in the proof of Theorem \ref{thm:appro},
then we get that $\mathbf{a}\in\mathbb{C}^{*I}$ is a singularity
of $f_{\triangle}^{I}$. If $\mathrm{ord_{t}}(f(\mathbf{z}(t),\overline{\mathbf{z}}(t)))=0$,
we are in case (II) as in the proof of Theorem \ref{thm:appro},
then $\mathbf{a}\in\mathbb{C}^{*I}$ is a singularity of $f_{\triangle}^{I}$.

Set $A=(\mathbf{a},1,1,\ldots,1)$ with the $i^{th}$ coordinate $z_{i}=1$
for $i\notin I$. By recalling the definition of Newton boundary at infinity for mixed polynomial, we have the following two cases:

(I). If $d_{\mathbf{P}}<0$, then, from \cite[Lemma 3.1]{CT}, we conclude that $\triangle$
is a face of $\Gamma^{+}(f^{I})$. On the other hand since $\mathbf{a}$
is a singularity of $f_{\triangle}^{I}$, by Remark  this
contradicts the Newton strong non degeneracy of $f^{I}$.

(II). If $d_{\mathbf{P}}=\delta=0$, then, from \cite[Lemma 3.1]{CT},
it follows that either $\triangle$ is a face of $\Gamma^{+}(f^{I})$ or $\triangle$ satisfies
condition (ii) of Definition \ref{d:badface}. Assume first $\triangle$ is
a face of $\Gamma^{+}(f^{I})$, then we get the same contradiction as
that in (I). Thus $\triangle$ verifies condition (ii) of Definition \ref{d:badface}.
We proceed to show that $\triangle$ is strictly bad face of $\overline{\mathrm{supp(\mathit{f})}}$.
Let us denote by $d$ the minimal value of the restriction of $l_{\mathbf{P}}$
to $\overline{\mathrm{supp}(f)}$. Since $\overline{\mathrm{supp(\mathit{f^{I}})}}=\overline{\mathrm{supp}(f)}\cap\mathbb{R}_{+}^{I}$,
we have $d\leq d_{\mathbf{P}}=0$. Let $H$ be the hyperplane of the
equation $\sum_{i=1}^{m}p_{i}x_{i}+q\sum_{i=m+1}^{n}x_{i}=0$, where
$q>-d+1>0$. Hence for any $\mathbf{x}=(x_{1},\ldots,x_{n})\in\overline{\mathrm{supp}(f)}\setminus\overline{\mathrm{supp}(f^{I})}$,
the value of $\sum_{i=1}^{m}p_{i}x_{i}+q\sum_{i=m+1}^{n}x_{i}$ is
positive. We therefore get $\triangle=\overline{\mathrm{supp(\mathit{f^{I}})}}\cap H=\overline{\mathrm{supp}(f)}\cap H$.
On the other hand, note that $p_{1}=p=\underset{1\leq i\leq m}{\min}\{p_{i}\}<0$
and $q>0$. If $\triangle$ does not satisfy condition (i)(a) of
Definition \ref{d:badface}, then we have $m=n$ and $p_{i}\leq0$ for all $1\leq i\leq n$.
It follows that $f$ can not depend on $z_{1}$ otherwise $d_{\mathbf{P}}$
will be negative. This contradicts the effectiveness
of $f$. Hence we conclude that $\triangle$ is a strictly bad face of
$\overline{\mathrm{supp(\mathit{f})}}$. Since $d_{\mathbf{P}}=0$,
we obtain $c=f_{\triangle}^{I}(\mathbf{a},\overline{\mathbf{a}})=f_{\triangle}(\mathbf{A},\overline{\mathbf{A}})\neq0$.
By $\mathbf{A}\in\Sing\varphi_{\triangle}$ and Proposition \ref{pro:homogeneous},
we get $c_{0}\in\varphi_{\triangle}(\Sing\varphi_{\triangle})$.
When $\lambda(t)\equiv0$, it follows that $\mathbf{a\in\mathbb{C}}^{*I}$
is a singularity of $f_{\triangle}^{I}$ from \eqref{eq:5-18}. In the same manner as above reasoning,
we get the desired conclusion.
\end{proof}
\begin{remark}
\label{rem:conve}
In particular, if a mixed polynomial $f$ is Newton strongly
non-degenerate at infinity and convenient, then by \cite[Corollary 4.1]{CT},
we have $S(f)=\emptyset$. Combining this conclusion with the above theorem, we get $S(\varphi)=\emptyset$
since $\mathfrak{SB}=\emptyset$.
\end{remark}

\section{Fibration at infinity}\label{s:fib}
Recall that for a strongly non-degenerate polynomial $f$, we have the monodromy fibration:
\[
f_{\mid}:f^{-1}(S_{\delta}^{1})\rightarrow S_{\delta}^{1}.\]
over some circle $S_{\delta}^{1}$ of radius $\delta$ which is sufficiently large.
We define two vectors on $\mathbb{C}^{n}\setminus V(f)$:
\begin{eqnarray*}
v_{1}(\mathbf{z},\overline{\mathbf{z}}) & = & \overline{\mathrm{d}\log f}(\mathbf{z},\overline{\mathbf{z}})+\overline{\mathrm{d}}\log f(\mathbf{z},\overline{\mathbf{z}})\\
v_{2}(\mathbf{z},\overline{\mathbf{z}}) & = & i(\overline{\mathrm{d}\log f}(\mathbf{z},\overline{\mathbf{z}})-\overline{\mathrm{d}}\log f(\mathbf{z},\overline{\mathbf{z}})).
\end{eqnarray*}
which have the following geometrical meanings: $v_{1}(\mathbf{z},\overline{\mathbf{z}})$
is the normal vector of $\log\left|f\right|$ and $v_{2}(\mathbf{z},\overline{\mathbf{z}})$
is the normal vector of $-i\log\frac{f}{\left|f\right|}$. In order
to prove Theorem \ref{thm:M fib}, we shall first prove the following proposition.
\begin{proposition}
\label{pro:Under-the-same}Under the same assumption as in Theorem
\ref{thm:M fib}, there exists $\delta_{2}>0$ sufficient large,
such that for any $\mathbf{z}$ of $\{\mathbf{z}\in\mathbb{C}^{n}\mid\left|f(\mathbf{z},\overline{\mathbf{z}})\right|\geq\delta_{2}\}$
the there vectors\[
\mathbf{z},\quad v_{1}(\mathbf{z},\overline{\mathbf{z}}),\quad v_{2}(\mathbf{z},\overline{\mathbf{z}})\]
are either linearly independent over $\mathbb{R}$ or they are linearly
dependent over $\mathbb{R}$ with the following relation \[
\mathbf{z}=av_{1}(\mathbf{z},\overline{\mathbf{z}})+bv_{2}(\mathbf{z},\overline{\mathbf{z}})\]
where $a>0$.
\end{proposition}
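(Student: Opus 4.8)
The plan is to obtain $\delta_{2}$ as an upper bound for $|f|$ on the \emph{bad locus}
\[
\cB:=\bigl\{\mathbf{z}\in\mathbb{C}^{n}\setminus V(f)\mid v_{1},v_{2}\ \mathbb{R}\text{-linearly dependent}\bigr\}\ \cup\ \bigl\{\mathbf{z}\in\mathbb{C}^{n}\setminus V(f)\mid v_{1},v_{2}\ \text{independent},\ \mathbf{z}=av_{1}+bv_{2},\ a\le 0\bigr\},
\]
which contains every point at which the asserted dichotomy fails. Multiplying the defining conditions by $|f|^{2}$ (e.g.\ $|f|^{2}v_{1}=f\,\overline{\mathrm{d}f}+\overline{f}\,\overline{\mathrm{d}}f$) makes them polynomial, so $\cB$ is semi-algebraic; by Proposition \ref{pro:singularity}, $v_{1},v_{2}$ are $\mathbb{R}$-dependent at a point of $\mathbb{C}^{n}\setminus V(f)$ exactly when $\overline{\mathrm{d}f}=\lambda\overline{\mathrm{d}}f$ with $|\lambda|=1$, i.e.\ when $\mathbf{z}\in\Sing f$, so the first piece of $\cB$ equals $\Sing f\setminus V(f)$, while clearing denominators in $\mathbf{z}=av_{1}+bv_{2}$ shows the second piece lies in $M(f)\setminus V(f)$. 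Assume $|f|$ is unbounded on $\cB$. By the Curve Selection Lemma at infinity (\cite[Lemma 2]{NZ2}, \cite[Lemma 2.3]{CT}) there is a real analytic arc $\mathbf{z}(t)$, $t\in\left]0,\varepsilon\right[$, inside $\cB$ with $\|\mathbf{z}(t)\|\to\infty$ and $|f(\mathbf{z}(t))|\to\infty$ as $t\to 0$; after shrinking $\varepsilon$ it lies in one of the strata (a) $\mathbf{z}(t)\in\Sing f$; (b) $v_{1},v_{2}$ independent with $a(t)\equiv 0$; (c) $v_{1},v_{2}$ independent with $a(t)<0$ (the coefficients $a(t),b(t)$ being real analytic by Cramer's rule).

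In stratum (a) I would copy the Newton expansion of the proof of Theorem \ref{thm:appro}: write $z_{i}(t)=a_{i}t^{p_{i}}+\mathrm{h.o.t.}$ on the set $I$ of non-vanishing coordinates, let $\triangle$ be the maximal face of $\overline{\supp(f^{I})}$ minimizing $l_{\mathbf{P}}$ with minimum $d_{\mathbf{P}}$; since $|f(\mathbf{z}(t))|\to\infty$ we get $d_{\mathbf{P}}<0$, hence $\triangle$ is a face of $\Gamma^{+}(f^{I})$ by \cite[Lemma 3.1]{CT}, and comparing lowest-order terms in $\overline{\mathrm{d}f}(\mathbf{z}(t))=\lambda(t)\overline{\mathrm{d}}f(\mathbf{z}(t))$, $|\lambda(t)|\equiv 1$, gives $\mathbf{a}\in\Sing f_{\triangle}^{I}\cap\mathbb{C}^{*I}$, contrary to the Newton strong non-degeneracy of $f^{I}$ (\cite[Remark 3.3]{CT}). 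In stratum (b), $\mathbf{z}(t)=b(t)v_{2}(\mathbf{z}(t))$ with $b(t)\not\equiv 0$, so $\mathbf{z}(t)\in M(\varphi)$; but $M(\varphi)$ is bounded by Theorem \ref{thm:appro}, contradicting $\|\mathbf{z}(t)\|\to\infty$.

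Stratum (c) is the crux. Here $\mathbf{z}(t)\in M(f)\setminus V(f)$ and, dividing $|f|^{2}\mathbf{z}(t)=(a(t)+ib(t))f(\mathbf{z}(t))\,\overline{\mathrm{d}f}(\mathbf{z}(t))+\overline{(a(t)+ib(t))f(\mathbf{z}(t))}\,\overline{\mathrm{d}}f(\mathbf{z}(t))$ by $|f|^{2}$, we get $\mathbf{z}(t)=\eta(t)\overline{\mathrm{d}f}(\mathbf{z}(t))+\overline{\eta(t)}\,\overline{\mathrm{d}}f(\mathbf{z}(t))$ with $\eta(t)=(a(t)+ib(t))/\overline{f(\mathbf{z}(t))}$ real analytic and $\mathrm{Re}\bigl(\eta(t)\overline{f(\mathbf{z}(t))}\bigr)=a(t)<0$. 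I would expand $\eta(t)=\eta_{0}t^{\gamma}+\mathrm{h.o.t.}$ and $f(\mathbf{z}(t))=f_{\triangle}^{I}(\mathbf{a})t^{d_{\mathbf{P}}}+\mathrm{h.o.t.}=bt^{\delta}+\mathrm{h.o.t.}$ ($\delta<0$, $d_{\mathbf{P}}\le\delta$); comparing orders componentwise shows $p_{i}\ge(\gamma+d_{\mathbf{P}})/2$ for all $i$, and with $J:=\{i\in I\mid p_{i}=(\gamma+d_{\mathbf{P}})/2\}$ one gets either $J=\emptyset$, in which case $\eta_{0}\overline{\mathrm{d}f_{\triangle}^{I}}(\mathbf{a})+\overline{\eta_{0}}\,\overline{\mathrm{d}}f_{\triangle}^{I}(\mathbf{a})=0$, whence $\mathbf{a}\in\Sing f_{\triangle}^{I}\cap\mathbb{C}^{*I}$ with $\triangle$ a face of $\Gamma^{+}(f^{I})$ (excluded as above), or $J\neq\emptyset$, in which case $p:=(\gamma+d_{\mathbf{P}})/2=\min_{k}p_{k}<0$ and, componentwise, $a_{i}=\eta_{0}\,\overline{\partial_{z_{i}}f_{\triangle}^{I}(\mathbf{a})}+\overline{\eta_{0}}\,\partial_{\overline{z}_{i}}f_{\triangle}^{I}(\mathbf{a})$ for $i\in J$, while $\eta_{0}\,\overline{\partial_{z_{i}}f_{\triangle}^{I}(\mathbf{a})}+\overline{\eta_{0}}\,\partial_{\overline{z}_{i}}f_{\triangle}^{I}(\mathbf{a})=0$ for $i\in I\setminus J$. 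Taking the Hermitian pairing of these relations with $(p_{i}a_{i})_{i\in I}$ gives a real number equal to $p\sum_{i\in J}|a_{i}|^{2}$, and, by the weighted Euler identity $\sum_{i}p_{i}\bigl(a_{i}\partial_{z_{i}}f_{\triangle}^{I}(\mathbf{a})+\overline{a_{i}}\,\partial_{\overline{z}_{i}}f_{\triangle}^{I}(\mathbf{a})\bigr)=d_{\mathbf{P}}f_{\triangle}^{I}(\mathbf{a})$, also equal to $\mathrm{Re}\bigl(\overline{\eta_{0}}\,d_{\mathbf{P}}f_{\triangle}^{I}(\mathbf{a})\bigr)$; hence
\[
p\sum_{i\in J}|a_{i}|^{2}=d_{\mathbf{P}}\,\mathrm{Re}\bigl(\overline{\eta_{0}}\,f_{\triangle}^{I}(\mathbf{a})\bigr).
\]
Since the left side is negative we get $f_{\triangle}^{I}(\mathbf{a})\neq 0$, so $d_{\mathbf{P}}=\delta<0$, $f_{\triangle}^{I}(\mathbf{a})=b$, and $\mathrm{Re}\bigl(\overline{\eta_{0}}\,b\bigr)=p\sum_{i\in J}|a_{i}|^{2}/d_{\mathbf{P}}>0$; but $\mathrm{Re}\bigl(\overline{\eta_{0}}\,b\bigr)$ is the lowest-order coefficient of $a(t)=\mathrm{Re}(\eta(t)\overline{f(\mathbf{z}(t))})$, which is $\le 0$ because $a(t)<0$ for small $t>0$ --- a contradiction. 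So $J=\emptyset$, which was already excluded. Therefore $|f|$ is bounded on $\cB$, and any $\delta_{2}$ above this bound meets the requirement. I expect this final sign bookkeeping to be the one point genuinely beyond Theorem \ref{thm:appro}: the key is recognizing that the Hermitian pairing of the Milnor relations with $(p_{i}a_{i})$ is a real quantity whose value, via the weighted Euler identity for $f_{\triangle}^{I}$, is incompatible with $a(t)<0$.
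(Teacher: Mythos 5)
Your overall route—define a semi-algebraic ``bad locus'' $\cB$, assume $|f|$ is unbounded on it, extract a real analytic arc at infinity, and run Newton expansions—matches the paper's philosophy, and your stratum~(c) argument (rewriting $\mathbf{z}(t)=\eta(t)\,\overline{\mathrm{d}f}+\overline{\eta(t)}\,\overline{\mathrm{d}}f$ and using the weighted Euler identity to force $\mathrm{Re}(\overline{\eta_0}\,b)>0$, contradicting $a(t)<0$) is essentially the paper's computation with $\frac{\mathrm{d}f}{\mathrm{d}t}$, just packaged more transparently. However, there is a genuine gap in stratum~(b): you dispose of the case $a(t)\equiv0$ by saying $\mathbf{z}(t)\in M(\varphi)$ and invoking Theorem~\ref{thm:appro} to conclude $M(\varphi)$ is bounded. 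But Theorem~\ref{thm:appro} assumes strong non-degeneracy \emph{for every face of $\overline{\supp(f)}$}, whereas Proposition~\ref{pro:Under-the-same} only inherits from Theorem~\ref{thm:M fib} the standard hypothesis (non-degeneracy for faces of $\Gamma^{+}(f)$). These are not the same: the default hypothesis leaves the strictly bad faces uncontrolled, which is exactly why the paper needs the more delicate Theorem~\ref{thm:appro1}, and why under that hypothesis $M(\varphi)$ may be unbounded. So you cannot cite Theorem~\ref{thm:appro} here.

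The gap is patchable, but you must use the extra leverage you already exploit in stratum~(a): along your arc $|f(\mathbf{z}(t))|\to\infty$, hence $d_{\mathbf{P}}\le\delta<0$, and by \cite[Lemma 3.1]{CT} the relevant face $\triangle$ lies in $\Gamma^{+}(f^{I})$—so the Newton-boundary non-degeneracy applies and a bad-face escape is impossible. Concretely, stratum~(b) is just the boundary case $a(t)\equiv0$ of your stratum~(c) computation: with $\eta(t)=ib(t)/\overline{f(\mathbf{z}(t))}$ one has $\mathrm{Re}(\overline{\eta_{0}}\,b)=0$, which either contradicts $\mathrm{Re}(\overline{\eta_{0}}\,b)>0$ (if $J\neq\emptyset$) or forces $\mathbf{a}\in\Sing f^{I}_{\triangle}\cap\mathbb{C}^{*I}$ with $\triangle\subset\Gamma^{+}(f^{I})$ (if $J=\emptyset$), contradicting strong non-degeneracy. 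Alternatively—and this is what the paper actually does for both your strata~(a) and~(b)—one invokes \cite[Theorem 1.1]{CT}: under the hypothesis of Theorem~\ref{thm:M fib} the set $f(\Sing f)\cup S(f)$ is bounded, so a point of $\Sing f\setminus V(f)$, or of $M(f)\setminus V(f)$ going to infinity, cannot have $|f|\to\infty$. That shortcut avoids the Newton expansion entirely for these two strata and reserves it for the sign analysis of stratum~(c), where it is indispensable; your expansion-based treatment of stratum~(a) is correct but heavier than necessary.
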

\begin{proof}
Since $f$ is strongly non-degenerate at infinity, by \cite[Theorem 1.1]{CT},
$f(\Sing f)\cup S(f)$ is bounded. Let us suppose that $f(\Sing f)\cup S(f)\subset D_{\delta_{1}}$.
For $\left|f(\mathbf{z},\overline{\mathbf{z}})\right|$ sufficient
large we shall prove either $\mathbf{z},v_{1}(\mathbf{z},\overline{\mathbf{z}}),v_{2}(\mathbf{z},\overline{\mathbf{z}})$
are linearly independent over $\mathbb{R}$ or $\mathbf{z}=av_{1}(\mathbf{z},\overline{\mathbf{z}})+bv_{2}(\mathbf{z},\overline{\mathbf{z}})$
where $ab\neq0$. Assume that $\mathbf{z}$ and $v_{2}(\mathbf{z},\overline{\mathbf{z}})$
are linearly dependent over $\mathbb{R}$. By Curve selection Lemma at infinity,
there exist two analytic paths $\mathbf{z}(t)\subset\mathbb{C}^{n}$
and $\lambda(t)\subset\mathbb{R}$ defined on a small enough interval
$\left]0,\varepsilon\right[$ such that
\begin{align}
\underset{t\rightarrow0}{\lim}\,\Vert\mathbf{z}(t)\Vert & =\infty,\,\underset{t\rightarrow0}{\lim}\, f(\mathbf{z}(t),\overline{\mathbf{z}}(t))=\infty.\label{eq:5-19}\\
i(\overline{\mathrm{d}\log f}-\overline{\mathrm{d}}\log f) & (\mathbf{z}(t),\overline{\mathbf{z}}(t))=\lambda(t)\mathbf{z}(t).\label{eq:5-20}
\end{align}
By Lemma \ref{lem:sag}, we have $\mathbf{z}(t)\subset M(f)$.
Thus $\underset{t\rightarrow0}{\lim}\, f(\mathbf{z}(t),\overline{\mathbf{z}}(t))=\infty$
contradicts our condition $f(\Sing f)\cup S(f)\subset D_{\delta_{1}}$.
For $\left|f(\mathbf{z},\overline{\mathbf{z}})\right|$ sufficiently
large, we have actually proved that $\mathbf{z}$ and $v_{2}(\mathbf{z},\overline{\mathbf{z}})$
are linearly independent over $\mathbb{R}$. Since $\overline{\mathrm{d}\log f}+\overline{\mathrm{d}}\log f=\frac{1}{\left|f\right|^{2}}(f\overline{\mathrm{d}f}+\overline{f}\overline{\mathrm{d}}f)$,
a slightly change in the proof of the above linearly independence
shows that for $\left|f(\mathbf{z},\overline{\mathbf{z}})\right|$
sufficient large, $\mathbf{z}$ and $v_{1}(\mathbf{z},\overline{\mathbf{z}})$
are also linearly independent over $\mathbb{R}$. If $v_{1}(\mathbf{z},\overline{\mathbf{z}}),v_{2}(\mathbf{z},\overline{\mathbf{z}})$
are linearly dependent over $\mathbb{R}$, we have $\mathbf{z}\in\Sing f\setminus V(f)$.
Hence $f(\mathbf{z}(t),\overline{\mathbf{z}}(t))\subset f(\Sing f)$.
This contradicts the boundness of $f(\Sing f)$. It follows that
$v_{1}(\mathbf{z},\overline{\mathbf{z}}),v_{2}(\mathbf{z},\overline{\mathbf{z}})$
are linearly independent over $\mathbb{R}$ for $\left|f(\mathbf{z},\overline{\mathbf{z}})\right|$ sufficient large. We are reduce to proving
the proposition for $a>0$. In the remainder of the proof, we assume
$a<0$. By Curve selection Lemma at infinity, there exist the analytic
curves $\mathbf{z}(t)\in\mathbb{C}^{n}$, $a(t)<0$ and $b(t)\in\mathbb{R}$
defined on a small enough interval $\left]0,\varepsilon\right[$ such
that
\begin{align}
\underset{t\rightarrow0}{\lim}\Vert\mathbf{z}(t)\Vert & =\infty,\,\underset{t\rightarrow0}{\lim}f(\mathbf{z}(t),\overline{\mathbf{z}}(t))=\infty.\label{eq:5-21}\\
\mathbf{z}(t) & =a(t)v_{1}(\mathbf{z},\overline{\mathbf{z}})(t)+b(t)v_{2}(\mathbf{z},\overline{\mathbf{z}})(t).\label{eq:5-22}
\end{align}
Let $I=\left\{ i\mid z_{i}(t)\not\not\equiv0\right\}$. Without loss
of generality we can assume $I=\left\{ 1,\ldots,m\right\} $, then
we have:
\begin{alignat*}{1}
z_{i}(t) & =a_{i}t^{p_{i}}+\mathrm{h.o.t.,\qquad\mathit{\mathrm{where\,}a_{j}\neq}0,\,}p_{i}\in\mathbb{Z},\, i\in I.\\
f(\mathbf{z}(t),\overline{\mathbf{z}}(t)) & =bt^{q}+\mathrm{h.o.t.},\qquad\mathrm{where}\, b\in\mathbb{C}^{*},\, q\in\mathbb{Z},\, q<0\\
a(t) & =\lambda_{0}t^{v_{0}}+\mathrm{h.o.t.},\qquad\mathrm{where}\,\lambda_{0}\in\mathbb{R},\, v_{0}\in\mathbb{Z}\\
b(t) & =\beta_{0}t^{v_{0}}+\mathrm{h.o.t.},\qquad\mathrm{where}\,\beta_{0}\in\mathbb{R},\, v_{0}\in\mathbb{Z}
\end{alignat*}
where $\left|\lambda_{0}\right|+\left|\beta_{0}\right|\neq0$. If
$\lambda_{0}\in\mathbb{R}^{*}$, then, by our assumption $a(t)<0$,
we have $\lambda_{0}<0$. To shorten notation, we write $\mathbf{a}=(a_{1},\ldots,a_{m})\in\mathbb{C}^{*I}$,
$\mathbf{P}=(p_{1},\ldots,p_{m})\in\mathbb{R}^{m}$ and consider the
linear function $l_{\mathbf{P}}=\sum_{i=1}^{m}p_{i}x_{i}$ defined
on $\overline{\mathrm{supp(\mathit{f}^{I})}}$. Let $\triangle$ be
the \emph{maximal face} of $\overline{\mathrm{supp(\mathit{f^{I}})}}$
where $l_{\mathbf{P}}$ takes its minimal value, say this value is
$d_{\mathbf{P}}$. We have $d_{\mathbf{P}}\leq\mathrm{ord_{t}}(f(\mathbf{z}(t),\overline{\mathbf{z}}(t))=q<0$.
By the above expansions, we get from \eqref{eq:5-22}:
\begin{alignat}{1}
\lambda_{0}(\frac{\frac{\overline{\partial f_{\triangle}^{I}}}{\partial z_{i}}(\mathbf{a},\overline{\mathbf{a}})}{\overline{b}}+\frac{\frac{\partial f_{\triangle}^{I}}{\partial\overline{z}_{i}}(\mathbf{a},\overline{\mathbf{a}})}{b})+i\beta_{0}(\frac{\frac{\overline{\partial f_{\triangle}^{I}}}{\partial z_{i}}(\mathbf{a},\overline{\mathbf{a}})}{\overline{b}}-\frac{\frac{\partial f_{\triangle}^{I}}{\partial\overline{z}_{i}}(\mathbf{a},\overline{\mathbf{a}})}{b}) & =
\begin{cases}
a_{i}, & \quad\mathrm{if}\, d_{\mathbf{P}}-p_{i}-q+v_{0}=p_{i}.\\
0, & \quad\mathrm{if}\, d_{\mathbf{P}}-p_{i}-q+v_{0}<p_{i}.
\end{cases}\label{eq:5-23}
\end{alignat}
Let $J=\left\{ j\in I\mid d_{\mathbf{P}}-p_{j}-q+v_{0}=p_{j}\right\}$.
We observe $J=\{j\in I\mid p_{j}=p=\underset{j\in I}{\min}\{p_{j}\}<0\}$.
If $J=\emptyset$, then from \eqref{eq:5-23}, we have $\mathbf{a}\in\Sing f_{\triangle}^{I}$.
Since $d_{\mathbf{P}}<0$, by \cite[Lemma 3.1]{CT}, we conclude that $\triangle$ is a face
of $\Gamma^{+}(f^{I})$. This contradicts the Newton strongly non degeneracy
of $f^{I}$. Hence $J\neq\emptyset$. To deduce the contradiction,
consider the following expansion:
\begin{align*}
\frac{\lambda_{0}+i\beta_{0}}{\overline{b}}\frac{\mathrm{d}\overline{f}(\mathbf{z}(t),\overline{\mathbf{z}}(t))}{\mathrm{d}t}+\frac{\lambda_{0}-i\beta_{0}}{b}\frac{\mathrm{d}f(\mathbf{z}(t),\overline{\mathbf{z}}(t))}{\mathrm{d}t}\\
=2\lambda_{0}qt^{q-1}+\mathrm{h.o.t.}
\end{align*}
We also have:\[
\frac{\mathrm{d}f(\mathbf{z}(t),\overline{\mathbf{z}}(t))}{\mathrm{d}t}=\left[\left\langle \mathbf{Pa},\overline{\mathrm{d}f_{\triangle}^{I}}(\mathbf{a},\overline{\mathbf{a}})\right\rangle +\left\langle \mathbf{P}\mathbf{\overline{\mathbf{a}}},\overline{\bar{\mathrm{d}}f_{\triangle}^{I}(\mathbf{a},\overline{\mathbf{a}})}\right\rangle \right]t^{d_{\mathbf{P}}-1}+\mathrm{h.o.t}.\]
By \eqref{eq:5-23}, we obtain:
\begin{align*}
\frac{\lambda_{0}+i\beta_{0}}{\overline{b}}\frac{\mathrm{d}\overline{f}(\mathbf{z}(t),\overline{\mathbf{z}}(t))}{\mathrm{d}t}+\frac{\lambda_{0}-i\beta_{0}}{b}\frac{\mathrm{d}f(\mathbf{z}(t),\overline{\mathbf{z}}(t))}{\mathrm{d}t}\\
=(2\sum_{j\in J}p\Vert a_{j}\Vert^{2})t^{d_{\mathbf{P}}-1}+\mathrm{h.o.t.}
\end{align*}
Since $d_{\mathbf{P}}\leq q$, comparing the two expansions of $\frac{\lambda_{0}+i\beta_{0}}{\overline{b}}\frac{\mathrm{d}\overline{f}(\mathbf{z}(t),\overline{\mathbf{z}}(t))}{\mathrm{d}t}+\frac{\lambda_{0}-i\beta_{0}}{b}\frac{\mathrm{d}f(\mathbf{z}(t),\overline{\mathbf{z}}(t))}{\mathrm{d}t}$,
It follows that $d_{\mathbf{P}}=q$ and $\lambda_{0}=\frac{\sum_{j\in J}p\Vert a_{j}\Vert^{2}}{q}>0$
from $p<0$ and $q<0$. This contradicts $\lambda_{0}<0$.
\end{proof}
\begin{remark}
In the holomorphic setting, the parallel results of this proposition
are \cite[Lemma 4.4]{Mi} and \cite[Lemma 4 and Lemma 5]{NZ2}; in the mixed setting,
this is a global analogue of \cite[Lemma 34]{Oka2}.
\end{remark}
\paragraph*{\bf{Proof of Theorem \ref{thm:M fib}}}
The proof is done as in the case of  a holomorphic polynomial. The strong non-degeneracy of $f$ yields a global fibration:
\[
f_{\mid}:f^{-1}(S_{\delta}^{1})\rightarrow S_{\delta}^{1}\]
where $\delta>0$ is sufficiently large. Since $S_{\delta}^{1}$ is compact
and $f(\Sing f)\cup S(f)$ is bounded, there exists $R_{0}>0$ sufficiently
large such that all the fibers intersect $S_{R}$ transversely for
any $R\geq R_{0}$. We therefore get the restriction\[
f_{\mid}:f^{-1}(S_{\delta}^{1})\cap B_{R}\rightarrow S_{\delta}^{1}\]
which is equivalent to the global fibration. By Proposition \ref{pro:Under-the-same},
there exists a non-zero vector field $\omega$ on $N=\{\mathbf{z}\in B_{R}\mid\left|f(\mathbf{z},\overline{\mathbf{z}})\right|\geq\delta\}$
such that\[
\begin{cases}
\mathrm{Re}\left\langle w(\mathbf{z}),v_{2}(\mathbf{z},\overline{\mathbf{z}})\right\rangle  & =0\\
\mathrm{Re}\left\langle w(\mathbf{z}),v_{1}(\mathbf{z},\overline{\mathbf{z}})\right\rangle  & >0\\
\mathrm{Re}\left\langle w(\mathbf{z}),\mathbf{z}\right\rangle >0.
\end{cases}\]
Along the integral curve $\gamma(t,\mathbf{z}_{0})$ of $w$ with
$\gamma(0,\mathbf{z}_{0})=\mathbf{z}_{0}\in N$, it is easily seen that the argument of $f(\gamma(t,\mathbf{z}_{0}),\overline{\gamma(t,\mathbf{z}_{0})})$
is constant and $\left|f(\gamma(t,\mathbf{z}_{0}),\overline{\gamma(t,\mathbf{z}_{0})})\right|$,
$\left\Vert \gamma(t,\mathbf{z}_{0})\right\Vert $ are monotone increasing.
Thus for every $\mathbf{z}_{0}\in N$, there exists a unique $h(\mathbf{z}_{0})\in S_{R}^{2n-1}\setminus f^{-1}(D_{\delta})$
and $t_{0}\in\mathbb{R}_{+}$ such that $\left\Vert \gamma(t_{0},h(\mathbf{z}_{0}))\right\Vert =R$.
Consequently, there is an isomorphism $\phi:f^{-1}(S_{\delta}^{1})\cap B_{R}\rightarrow S_{R}^{2n-1}\setminus f^{-1}(D_{\delta})$.
We therefore get $\frac{f}{\left|f\right|}:S_{R}^{2n-1}\setminus f^{-1}(D_{\delta})\longrightarrow S^{1}$
a locally trivial fibration which is equivalent to the fibration
$f_{\mid}:f^{-1}(S_{\delta}^{1})\cap B_{R}\rightarrow S_{\delta}^{1}$.
So $\frac{f}{\left|f\right|}:S_{R}^{2n-1}\setminus f^{-1}(D_{\delta})\longrightarrow S^{1}$
is also equivalent to the global one. This completes our proof.
\fin\\
\paragraph*{\bf{Proof of Corollary \ref{cor:mixed global}}}
From Remark \ref{rem:conve}, it follows that $S(\varphi)=\emptyset$
and $M(\varphi)$ is bounded. Thus we have $\frac{f}{\left|f\right|}:S_{R}^{2n-1}\setminus K\longrightarrow S^{1}$
is a locally trivial fibration. Note that the proof of Theorem \ref{thm:M fib}
yields that this fibration is equivalent to the global fibration:\[
f_{\mid}:f^{-1}(S_{\delta}^{1})\rightarrow S_{\delta}^{1}\]
where $\delta>0$ is sufficient large.
\fin\\
\begin{example}
\cite[Example 5 IV]{Oka2}\label{ex:semitame}
Consider a mixed polynomial\[
f(\mathbf{z},\overline{\mathbf{z}})=\frac{1}{4}z_{1}^{2}-\frac{1}{4}\overline{z}_{1}^{2}+z_{1}\overline{z}_{1}-(1+i)(z_{1}+z_{2})(\overline{z}_{1}+\overline{z}_{2}).\]
Then we have:
 \end{example}
\begin{enumerate}
\item $f$ is not Newton strongly non-degenerate at infinity and $S(f)=\emptyset$.
\item $\Sing f=\{\mathbf{z}\in\mathbb{C}^{2}\mid z_{1}=0,z_{2}\in\mathbb{C}\}\cup\{\mathbf{z}\in\mathbb{C}^{2}\mid z_{1}+z_{2}=0,z_{1}-i\overline{z}_{1}=0\}\cup\{\mathbf{z}\in\mathbb{C}^{2}\mid z_{1}+z_{2}=0,z_{1}+i\overline{z}_{1}=0\}$
\item $M(\varphi)$ is not bounded and $S(\varphi)=\{-\frac{1+i}{\sqrt{2}},\frac{2\pm i}{\sqrt{5}}\}$.
\end{enumerate}
\begin{remark}\label{rem:mixed semitame}
The above example is due to Oka. In the holomorphic case, N\'emethi
and Zaharia proved the existence of the Milnor fibration
at infinity for semitame polynomials in \cite{NZ2}. The definition
of semitame is equivalent to $S(f)\subset\{0\}$. But this example
shows that in the mixed case, the condition $S(f)\subset\{0\}$ fails
to insure the existence of the Milnor fibration $\frac{f}{\left|f\right|}$ at infinity. We also
observe that the Newton strong non-degeneracy condition at infinity of Theorem \ref{thm:M fib} can not be replaced by Newton non-degeneracy condition at infinity.
\end{remark}
\section*{Acknowledgement}
The author wish to thank Professor Mihai Tib\u{a}r for his help and support during the preparation for this paper which is also a work contained in the thesis research \cite{Ch}.


\begin{thebibliography}{ZZZZ}

\bibitem[ACT]{ACT}
R.N.Ara\'ujo dos Santos, Ying Chen and Mihai Tib\u{a}r, {\em Singular open book structures from real mappings.}, to appear in Cent. Eur. J. Math.

\bibitem[Bo]{Bo}
A. Bodin, {\em Milnor fibration and fibred links at infinity}, Inter. Math. Res. Not. 11 (1999), 615-621.

\bibitem[Br1]{Br1}
S.A. Broughton, {\em On the topology of polynomial
hypersurface}, Singularities, Part 1 (Arcata, Calif., 1981), 167-178, Proc. Sympos. Pure Math., 40, Amer. Math. Soc., Providence, RI, 1983.

\bibitem[Br2]{Br2}
S.A. Broughton, {\em Milnor numbers and the topology
of polynomial hypersurfaces}, Invent. Math. 92 (1988), no. 2, 217-241.
\bibitem[Ch]{Ch}
Y.Chen, {\em Bifurcation values of mixed polynomials and Newton polyhedra}, PhD. thesis, Universit\'e de Lille 1 (2012).

\bibitem[Ci]{Ci}
J. L. Cisneros-Molina, {\em Join theorem for polar weighted homogeneous singularities}, Singularities II, Contemp. Math. 475, Amer. Math. Soc., Providence, RI, 2008, 43-59.

\bibitem[CT]{CT}
Y. Chen and M. Tib\u ar, \emph{Bifurcation values
of mixed polynomials},  Math. Res. Lett. 19 (2012), no.1, 59-79.

\bibitem[Ku]{Ku}
A. Kushnirenko,  {\em Poly\`edres de Newton et nombres
de Milnor}, Invent. Math. 32 (1976), 1-31.


\bibitem[Mi]{Mi}
J. Milnor, {\em Singular points of complex hypersurfaces},
Annals of Mathematics Studies, Princeton University Press 1968.

\bibitem[Ne1]{Ne1}
A. N\'emethi, {\em Th\'eorie de Lefschetz pour
les vari\'et\'es alg\'ebriques affines}, C. R. Acad. Sc. Paris,
t.303. Serie I., Nr. 12, 1986.

\bibitem[Ne2]{Ne2}
A. N\'emethi, {\em Lefschetz theory for complex
affine varieties, Rev. Roum. Math. Pures Appl.}, 33 (1988), 233-260.

\bibitem[NZ1]{NZ1}
A. N\'emethi, A. Zaharia, {\em  On the bifurcation
set of a polynomial function and Newton boundary},  Publ. Res. Inst. Math. Sci.  26  (1990),  no. 4,
681-689.

\bibitem[NZ2]{NZ2}
A. N\'emethi, A. Zaharia, {\em  Milnor fibration at
infinity}. Indag. Math. 3 (1992) , 323-335.


\bibitem[Oka1]{Oka1}
M. Oka, {\em Topology of polar weighted homogeneous
hypersurfaces}, Kodai Math. J. 31, 2 (2008), 163-182.

\bibitem[Oka2]{Oka2}
M. Oka, {\em Non degenerate mixed functions}, Kodai Math. J. 33, 1 (2010), 1-62.




\end{thebibliography}
\end{document}